\newcolumntype{^}{>{\currentrowstyle}}
\journal{arXiv}
\newcommand{\Z} {{\mathbb Z}}
\newtheorem{definition}{Definition}
\newtheorem{remark}{Remark}
\newtheorem{corollary}{Corollary}
\newtheorem{theorem}{Theorem}
\newtheorem{proposition}{Proposition}
\newcommand{\proof}{\medskip\noindent{\bf Proof.~}}
\begin{document}
\renewcommand{\abstractname}{Abstract}
\renewcommand{\refname}{References}
\renewcommand{\tablename}{Table.}
\renewcommand{\arraystretch}{0.9}
\thispagestyle{empty}
\sloppy

\begin{frontmatter}
\title{Spectra of strongly Deza graphs}

\author[06]{Saieed Akbari}
\ead{s\_akbari@sharif.edu}

\author[08]{Willem~H.~Haemers}
\ead{Haemers@tilburguniversity.edu}

\author[07]{Mohammad Ali Hosseinzadeh}
\ead{hosseinzadeh@ausmt.ac.ir}

\author[03]{Vladislav~V.~Kabanov}
\ead{vvk@imm.uran.ru}

\author[01,04]{Elena~V.~Konstantinova\corref{cor1}}
\cortext[cor1]{Corresponding author}
\ead{e\_konsta@math.nsc.ru}

\author[03,05]{Leonid~Shalaginov}
\ead{44sh@mail.ru}

\address[06]{Department of Mathematical Sciences, Sharif University of Technology, Azadi Street, P.O. Box 11155-9415, Tehran, Iran}
\address[08]{Department of Econometrics and Operations Research, Tilburg University, Tilburg, The Netherlands}
\address[07]{Faculty of Engineering Modern Technologies, Amol University of Special Modern Technologies, Amol 4616849767, Iran}
\address[03]{Krasovskii Institute of Mathematics and Mechanics, S. Kovalevskaja st. 16, Yekaterinburg, 620990, Russia}
\address[01]{Sobolev Institute of Mathematics, Ak. Koptyug av. 4, Novosibirsk, 630090, Russia}
\address[04]{Novosibisk State University, Pirogova str. 2, Novosibirsk, 630090, Russia}
\address[05]{Chelyabinsk State University, Brat'ev Kashirinyh st. 129, Chelyabinsk,  454021, Russia}

\begin{abstract}
A Deza graph $G$ with parameters $(n,k,b,a)$ is a $k$-regular graph with $n$ vertices such that any two distinct vertices have $b$ or $a$ common neighbours. The children $G_A$ and $G_B$ of a Deza graph $G$ are defined on the vertex set of $G$ such that every two distinct vertices are adjacent in $G_A$ or $G_B$ if and only if they have $a$ or $b$ common neighbours, respectively. A strongly Deza graph is a Deza graph with strongly regular children. In this paper we give a spectral characterisation of strongly Deza graphs, show relationships between eigenvalues, and study strongly Deza graphs which are distance-regular.
\end{abstract}

\begin{keyword} Deza graph, eigenvalues, strongly regular graph, divisible design graph, distance-regular graph, cospectral graphs

\vspace{\baselineskip}
\MSC[2010] 05C50\sep 05E10\sep 15A18
\end{keyword}
\end{frontmatter}

\section{Introduction}\label{sec0}
{\em A Deza graph} $G$ with parameters $(n,k,b,a)$ is a $k$-regular graph of order $n$ for which the number of common neighbours of two vertices takes just two values, $b$ or $a$, where $b\geqslant a$. Moreover, $G$ is not the complete graph or the edgeless graph. Deza  graphs were introduced in~\cite{D94}, and the name was given in~\cite{EFHHH}, where the basics of Deza graph theory were founded and different constructions of Deza graphs were presented.

Deza graphs can be considered in terms of matrices. Suppose $G$ is a graph with $n$ vertices, and $M$ is its adjacency matrix. Then $G$ is a Deza graph with parameters $(n,k,b,a)$ if and only if $$M^2=aA+bB+kI$$ for some symmetric $(0,1)$-matrices $A$ and $B$ such that $A+B+I=J$, where $J$ is the all ones matrix and $I$ is the identity matrix. 

Let $G$ be a Deza graph with $M$, $A$ and $B$ as above. If $b=a$ we put $A=J-I$ and $B=O$. Then $A$ and $B$ are adjacency matrices of graphs, and the corresponding graphs $G_A$ and $G_B$ are called  the {\it children} of $G$. A Deza graph of diameter two which is not a strongly regular graph is called a {\em strictly Deza graph}. Deza graphs whose children are complete multipartite graphs and their complement are known as {\em divisible design graphs}. Divisible design graphs were studied in~\cite{CH, HKM}. 

\begin{definition} A Deza graph is called a strongly Deza graph if its children are strongly regular graphs.
\end{definition}

Obviously, divisible design graphs are strongly Deza graphs.

In this paper spectral properties of strongly Deza graphs are studied. The paper is organized as follows. First, spectral relationships between Deza graphs and their children are given. Second, a spectral characterization of strongly Deza graphs is presented. Then, the  relationships between eigenvalues are shown for strongly Deza graphs with four and five distinct eigenvalues. We pay special attention to singular strongly Deza graphs, where a graph is said to be {\it singular} if and only if zero is its eigenvalue.
Finally, we consider distance-regular strongly Deza graphs.

\section{General results}

In this section we present general results on spectral properties of Deza graphs and strongly regular graphs. Spectral relationships between Deza graphs and their children are given by the following result.

\begin{theorem}{\rm\cite[Theorem~3.2]{AGHKKS}} \label{children_spec}
Let $G$ be a Deza graph with parameters $(n,k,b,a)$, $b>a$. Let $M,A,B$ be the adjacency matrices of $G$ and its children, respectively. If $\theta_1=k,\theta_2,\ldots,\theta_n$ are the eigenvalues of $M$, then

{\rm (i)} The eigenvalues of $A$ are 
$$\alpha = \cfrac{b(n-1)-k(k-1)}{b-a},\ \alpha_2 =\cfrac{k-b-\theta_2^2}{b-a},\
  \ldots,\ \alpha_n =\cfrac{k-b-\theta_n^2}{b-a}.$$

{\rm (ii)} The eigenvalues of $B$ are
  $$\beta =  \cfrac{a(n-1)-k(k-1)}{a-b},\ \beta_2 = \cfrac{k-a-\theta_2^2}{a-b},\
  \ldots,\ \beta_n = \cfrac{k-a-\theta_n^2}{a-b}.$$
\end{theorem}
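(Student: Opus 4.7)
The plan is to express both children's adjacency matrices $A$ and $B$ as explicit linear combinations of $M^2$, $J$ and $I$, and then evaluate them on the common eigenbasis of $M$ and $J$ supplied by the $k$-regularity of $G$.

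First I would use the identity $A+B+I = J$ to eliminate one child from the quadratic relation $M^2 = aA + bB + kI$. Substituting $B = J - I - A$ gives
$$(b-a)A \;=\; bJ + (k-b)I - M^2,$$
and symmetrically
$$(a-b)B \;=\; aJ + (k-a)I - M^2.$$
So $A$ and $B$ are polynomials in $M$, $J$, and $I$. Since $G$ is $k$-regular, $\mathbf{1}$ is an eigenvector of $M$ with eigenvalue $k$, and $M$ being symmetric admits an orthonormal eigenbasis $\{\mathbf{1}/\sqrt{n}, v_2,\ldots,v_n\}$ with $Mv_i = \theta_i v_i$ and $Jv_i = 0$ for $i \geq 2$.

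Evaluating the formula for $A$ on $v_i$ for $i\geq 2$, the $J$-term vanishes and one obtains
$$A v_i \;=\; \frac{(k-b) - \theta_i^2}{b-a}\,v_i,$$
giving $\alpha_i$ as claimed, and an identical computation delivers $\beta_i$. Evaluating the formula for $A$ on $\mathbf{1}$, using $J\mathbf{1} = n\mathbf{1}$ and $M^2\mathbf{1} = k^2\mathbf{1}$, one gets
$$A\mathbf{1} \;=\; \frac{bn + (k-b) - k^2}{b-a}\,\mathbf{1} \;=\; \frac{b(n-1) - k(k-1)}{b-a}\,\mathbf{1},$$
which is $\alpha$; and likewise for $\beta$. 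Since these $n$ eigenvectors are linearly independent, this exhausts the spectra of $A$ and $B$ with the correct multiplicities.

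There is no serious obstacle: the argument is essentially routine once one realises that $A+B+I = J$ collapses the two quadratic constraints into a single equation per child, and that $J$ and $M$ share the eigenvector $\mathbf{1}$ while acting as zero and $\theta_i$, respectively, on its orthogonal complement. The only observation worth flagging is a byproduct of the calculation, namely that $\mathbf{1}$ remains an eigenvector of $A$ (and of $B$), so the children of any Deza graph are automatically regular graphs—a fact implicitly needed to make sense of $\alpha$ and $\beta$ as the ``principal'' eigenvalues listed separately in the statement.
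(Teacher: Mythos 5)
Your proof is correct. Note that the paper does not prove this theorem at all --- it is quoted verbatim from \cite[Theorem~3.2]{AGHKKS} --- but your derivation (solving the pair $M^2=aA+bB+kI$, $A+B+I=J$ for each child, then evaluating on a common eigenbasis of $M$ and $J$ using regularity) is the standard argument and, judging from Remark~\ref{Remark-Th1}, essentially the one given in the cited source. Your closing observation that $\mathbf{1}$ stays an eigenvector of $A$ and $B$, so the children are regular, matches the remark the authors make immediately after the theorem.
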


If $b=a$ then $A=J-I$, $B=0$, $G$ is a strongly regular graph and $\theta_i^2=k-b$ for $i=2,\ldots,n$. If $b\neq a$ then children $G_A$ and $G_B$ are regular graphs with degrees $\alpha$ and $\beta$, respectively.

\begin{remark}\label{Remark-Th1} By the proof of~{\rm \cite[Theorem~3.2]{AGHKKS}} it follows that the multiplicities of $\alpha_i$ and $\beta_i$, $2\leqslant i\leqslant n$, are obtained as a summation of the multiplicities of $\pm \theta_i$ except the case  $\theta_n=-k$ with the multiplicity $1$ when a graph is bipartite~{\rm (see \cite[Proposition 3.4.1]{BH})}.
\end{remark}

It is known~\cite{EFHHH} that Deza graphs are a generalisation of strongly regular graphs in such a way that the number of common neighbours of any pair of distinct vertices in a Deza graph does not depend on the adjacency. Moreover, any Deza graph with parameters $(n,k,b,a)$ is a strongly regular graph with parameters $(n,k,\lambda, \mu)$ if and only if $M=A$, $M=B$ or $b=a$. Then we have  $\{\lambda, \mu\}=\{a,b\}$ and 
\begin{equation}\label{M^2}
M^2 = kI+\lambda M + \mu (J-I-M).
\end{equation}

Note that if $b=a$, the children are not strongly regular graphs, and therefore the graph is not a strongly Deza graph. But it is strongly regular with $\lambda=\mu$.

An eigenvalue of the adjacency matrix of a graph is said to be {\em principal} if the all-ones vector is not orthogonal to the associated eigenspace, and {\em restricted} otherwise. The next theorem is well-known (see for example~\cite[Theorem~1.3.1]{BCN89}) and contains a complete information about spectra of strongly regular graphs by their parameters.

\begin{theorem}\label{srg} Let $G$  be a strongly regular graph with parameters $(n,k,\lambda,\mu)$ and the eigenvalues $k$, $r$, and $s$.  Then the following statements hold.

{\rm{(i)}} The principal eigenvalue $k$ has the multiplicity 1.

{\rm{(ii)}} The restricted integer eigenvalues $$r,s=\cfrac{(\lambda-\mu)\pm \sqrt{(\lambda-\mu)^2+4(k-\mu)}}{2}$$ have the multiplicities $$f,g=\cfrac{1}{2} \left(n-1 \mp \cfrac{(r+s)(n-1)+2k}{r-s}\right).$$

{\rm{(iii)}} If $r$ and $s$ are not integers, then
$$r,s=\cfrac{-1\pm\sqrt{n}}{2}$$
with the same multiplicities. 
\end{theorem}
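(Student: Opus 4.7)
The plan is to derive everything from equation~(\ref{M^2}) together with the identity $\mathrm{tr}(M)=0$. Part~(i) is immediate: the all-ones vector $\mathbf 1$ satisfies $M\mathbf 1=k\mathbf 1$ by $k$-regularity, and since a strongly regular graph with three distinct eigenvalues is connected, the Perron--Frobenius theorem gives multiplicity one.

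For part~(ii), I would take any restricted eigenvector $v$, so $v\perp\mathbf 1$ and hence $Jv=0$, with $Mv=\theta v$, and apply equation~(\ref{M^2}) to $v$. This yields the quadratic
\[\theta^2-(\lambda-\mu)\theta-(k-\mu)=0,\]
whose two roots are exactly the stated expressions for $r$ and $s$. For the multiplicities I would then solve the linear system
\[1+f+g=n,\qquad k+fr+gs=\mathrm{tr}(M)=0,\]
and rearrange; the algebra is routine and directly produces the displayed formula for $f$ and $g$.

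For part~(iii), the starting point is that $r,s$, being the two roots of a monic integer quadratic, must be irrational algebraic conjugates whenever they are not integers (a non-integer rational root is excluded by the rational roots theorem). Since the characteristic polynomial of $M$ also has integer coefficients, conjugate roots appear with equal multiplicity, forcing $f=g=(n-1)/2$; equivalently, the irrational quantity $r-s$ in the denominator of the formula for $f$ from part~(ii) must cancel, giving $(r+s)(n-1)+2k=0$. Combined with $r+s=\lambda-\mu\in\mathbb{Z}$ this yields $\lambda-\mu=-2k/(n-1)$, and the bounds $0<k\le n-1$ leave only $\lambda-\mu=-2$ (the complete graph $K_n$, excluded since it has just two distinct eigenvalues) and $\lambda-\mu=-1$. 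Hence $k=(n-1)/2$, and the second spectral moment $\mathrm{tr}(M^2)=nk$,
\[nk=k^2+\tfrac{n-1}{2}(r^2+s^2),\]
together with $r+s=-1$ yields $r^2+s^2=(n+1)/2$ and thus $rs=(1-n)/4$, so $r,s=(-1\pm\sqrt n)/2$.

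The main obstacle is part~(iii): one has to combine an algebraic-conjugacy argument (to obtain $f=g$) with a short case analysis to rule out $\lambda-\mu=-2$, and then recover the second symmetric function $rs$ via $\mathrm{tr}(M^2)$. No individual step is difficult, but the pieces must be assembled in the right order for the characterisation of the conference-graph case to fall out cleanly.
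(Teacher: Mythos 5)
Your proof is correct and complete: part (i) follows from Perron--Frobenius on a connected regular graph (and three distinct eigenvalues does force connectedness); part (ii) from restricting the identity (\ref{M^2}) to a vector orthogonal to $\mathbf 1$, which gives the quadratic $\theta^2-(\lambda-\mu)\theta-(k-\mu)=0$, together with the linear system $f+g=n-1$, $fr+gs=-k$; and part (iii) from the Galois-conjugacy argument forcing $f=g$, hence $(r+s)(n-1)+2k=0$, the integrality of $\lambda-\mu=-2k/(n-1)$ pinning down $k=(n-1)/2$ after excluding the complete graph, and $\mathrm{tr}(M^2)=nk$ recovering $rs=(1-n)/4$. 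This is the standard textbook derivation. The paper itself does not prove Theorem~\ref{srg}: it quotes it as well known from \cite[Theorem~1.3.1]{BCN89} and only remarks that it can alternatively be obtained from Theorem~\ref{children_spec}, since a strongly regular graph with $\lambda\neq\mu$ is a Deza graph whose children are $G$ itself and its complement, so the eigenvalue formulas there specialize to give $r$ and $s$. Your route is self-contained and more elementary, needing nothing beyond (\ref{M^2}) and two trace identities, whereas the paper's remark situates the theorem as a degenerate instance of the Deza-graph machinery; note that even along the paper's route the conference-graph case (iii) would still require exactly the conjugate-multiplicity argument you supply, so that step is not dispensable in either approach.
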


Actually, this theorem can be obtained as a consequence of Theorem~\ref{children_spec} since if $G$ is a strongly regular graph, then $G$ itself and its complement are children of $G$, except the case when $\lambda=\mu$ and the complete graph and its complement are the children. It is known~\cite[Theorem 6.7]{CDS} that the smallest eigenvalue of a graph $G$ is at least $-1$ if and only if $G$ is a vertex disjoint union of complete graphs, and the second largest eigenvalue of a strongly regular graph $G$ is non-positive if and only if $G$ is a complete multipartite graph.  

\section{Spectral characterization}

By Theorems~\ref{children_spec} and~\ref{srg}, a strongly Deza graph has at most three distinct absolute values of its eigenvalues. But we can be more precise.

\begin{proposition}
Suppose $G$ is a strongly Deza graph with parameters $(n,k,b,a)$. \\
{\rm (i)} $G$ has at most five distinct eigenvalues.\\
{\rm (ii)} If $G$ has two distinct eigenvalues, then $a=0$, $b=k-1 \geqslant 1$, and $G$ is a disjoint union of cliques of order $k+1$.\\
{\rm (iii)} If $G$ has three distinct eigenvalues, then $G$ is a strongly regular graph with parameters $(n,k,\lambda ,\mu)$, where $\{\lambda,\mu\}=\{a,b\}$, or $G$ is disconnected and each component is a strongly regular graph with parameters $(v,k,b,b)$, or each component is a complete bipartite graph $K_{k,k}$ with $k\geqslant 2$.
\end{proposition}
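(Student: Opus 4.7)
The plan is to handle the three parts sequentially using Theorem~\ref{children_spec} as the main engine, together with the classical fact that a connected regular graph with exactly three distinct eigenvalues is strongly regular and the characterizations stated after Theorem~\ref{srg} (smallest eigenvalue $-1$ means disjoint union of cliques; a strongly regular graph with non-positive second eigenvalue is complete multipartite). For (i), I plug into $\alpha_i=(k-b-\theta_i^2)/(b-a)$ from Theorem~\ref{children_spec}: since $G_A$ is strongly regular it has at most three distinct eigenvalues, one of them being the degree $\alpha$, so $\{\alpha_i:2\le i\le n\}$ has at most two values. Because $\alpha_i$ depends only on $\theta_i^2$, the set $\{\theta_i^2:2\le i\le n\}$ has at most two values, hence $\{\theta_2,\ldots,\theta_n\}$ has at most four, and including $k$ gives at most five distinct eigenvalues of $G$. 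For (ii), a $k$-regular graph with two distinct eigenvalues has each component of diameter at most one, so every component is $K_{k+1}$. For $G$ to be a Deza graph with $b>a$, at least two components are required; counting common neighbours inside and across cliques then gives $a=0$ and $b=k-1$, with $b\ge 1$ equivalent to $k\ge 2$.

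For (iii), I split on the connectedness of $G$. If $G$ is connected, the classical result gives that $G$ is strongly regular; since $G$ is Deza with exactly two common-neighbour values, $\{\lambda,\mu\}=\{a,b\}$. If $G$ is disconnected with components $H_1,\ldots,H_c$, each $H_i$ is $k$-regular with spectrum in the three-element eigenvalue set $\{k,r,s\}$ of $G$. I rule out mixing $K_{k+1}$ with three-eigenvalue components: if some $H_i=K_{k+1}$ then $-1\in\{r,s\}$, and any other component with three distinct eigenvalues would have $-1$ as its smallest or second-largest eigenvalue, which by the characterizations after Theorem~\ref{srg} forces that component itself to equal $K_{k+1}$, a contradiction. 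The all-$K_{k+1}$ case produces only two eigenvalues and is excluded by (iii), so every $H_i$ is a connected strongly regular graph of the same parameters $(v,k,\lambda,\mu)$.

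The common-neighbour counts in $G$ are then $0$, $\lambda$ and $\mu$, and the Deza condition forces this set to have at most two values. The identity $k(k-\lambda-1)=(v-k-1)\mu$ rules out $\mu=0$ (which would make the components equal $K_{k+1}$), leaving $\lambda=\mu=b$ (which yields the second case of the statement) or $\lambda=0$ with $\mu=b>0$. In the last subcase I appeal to the strongly Deza hypothesis on $G_B$: its edges are exactly the non-adjacent pairs within a component, so $G_B$ is the disjoint union of $c$ copies of the complement of one component; being disconnected and strongly regular, $G_B$ must be a disjoint union of cliques of equal size. Hence the complement of each component is a disjoint union of cliques, so the component is complete multipartite, and together with triangle-freeness ($\lambda=0$) this forces it to be $K_{k,k}$; the condition $k\ge 2$ follows from requiring three distinct eigenvalues. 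I expect this last reduction, using the child $G_B$ to pin the $\lambda=0$ components down to $K_{k,k}$, to be the main obstacle.
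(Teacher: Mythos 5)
Your proof is correct, but in two places it follows a genuinely different route from the paper's. For part (i), the paper argues from ``at most three distinct absolute values'' and then splits on whether $-k$ is an eigenvalue, handling the bipartite case by showing $G_B$ is disconnected, so $G$ is a divisible design graph, and quoting the five-eigenvalue bound for those; your direct count via the injectivity of $\theta_i^2\mapsto\alpha_i$ avoids both the case split and the appeal to \cite{HKM}, and is cleaner --- the only point you should make explicit is why $\alpha$ itself does not recur among $\{\alpha_i:2\le i\le n\}$ (if the degree of a strongly regular graph has multiplicity at least $2$, the graph is a disjoint union of cliques and then has only two distinct eigenvalues, so in no case can three distinct values occur among the $\alpha_i$). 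For part (iii) in the disconnected case, the paper again routes through divisible design graphs: disconnectedness forces $a=0$ and $G_B$ disconnected, hence $G$ is a disconnected divisible design graph, and the classification \cite[Proposition 4.3]{HKM} is invoked; your component-by-component analysis, ruling out mixed component types, reducing the common-neighbour set $\{0,\lambda,\mu\}$ to two values, and using the child $G_B$ to force the $\lambda=0$ components to be $K_{k,k}$, is self-contained and more elementary at the cost of length. One small imprecision: when you exclude a three-eigenvalue component having $-1$ as its \emph{second-largest} eigenvalue, the cited characterization gives a complete multipartite graph, whose second-largest restricted eigenvalue is $0$ unless it is complete; the contradiction still follows, but not by concluding the component ``equals $K_{k+1}$'' directly. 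Part (ii) matches the paper's argument, with the welcome extra detail that $a=0$ and $b=k-1$.
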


\proof {\rm (i)} The eigenvalue of $G$ takes at most three distinct absolute values, one of which equals $k$. If $-k$ is not an eigenvalue, $G$ has at most five distinct eigenvalues. If $-k$ is an eigenvalue, then $G$ is bipartite. Hence, two vertices from different parts of the bipartition always have $a=0$ common neighbours, which implies that $G_B$ is disconnected. Therefore, $G$ is a divisible design graph, which has at most five distinct eigenvalues (see~\cite{HKM}).

{\rm (ii)} Any $k$-regular graph with just two eigenvalues is a disjoint union of cliques of order $k+1$, and if $k\geqslant 2$ it is a strongly Deza graph. 

{\rm (iii)} It is well-known that a connected regular graph with three distinct eigenvalues is a strongly regular graph (see~\cite[Proposition 2.1]{Doob}). If $G$ is disconnected, then $G$ is a disconnected divisible design graph. It follows (see~\cite[proposition 4.3]{HKM}) that each component is a strongly regular graph with parameters $(v,k,b,b)$, or it is a bipartite incidence graph of a symmetric block design with parameters $(v,k,b)$. If $v>k$, the latter option has four distinct eigenvalues, so does not occur. If $v=k$, then $G$ is the disjoint union of complete bipartite graphs, which is a strongly Deza graph if $k \geqslant 2$.
\hfill $\square$\\ 

If $G$ is a bipartite graph, then the~\emph{halved} graphs of $G$ are two connected components of the graph on the same vertex set, where two vertices are adjacent whenever they are at distance two in $G$.

The next theorem gives a spectral characterization of strongly Deza graphs.

\begin{theorem}\label{spec1}
Let $G$ be a connected Deza graph with parameters $(n,k,b,a)$, $b>a$, and it has at most three distinct absolute values of its eigenvalues. 

${\rm (i)}$ If $G$ is a non-bipartite graph, then $G$ is a strongly Deza graph.

${\rm (ii)}$ If $G$ is a bipartite graph, then either $G$ is a strongly Deza graph or its halved graphs are strongly Deza graphs. 
\end{theorem}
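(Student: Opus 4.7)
My plan is to bound the number of distinct eigenvalues of the children $G_A$ and $G_B$ via Theorem~\ref{children_spec}, and then invoke the fact that a regular graph with at most three distinct eigenvalues is strongly regular (in the broad sense that includes disjoint unions of equal cliques, which the paper already accepts as strongly regular children). The hypothesis on $G$ constrains the multiset $\{\theta_i^2:i\geq 2\}$: in the non-bipartite case it has at most two values, giving part~(i) directly, while in the bipartite case a third value equal to $k^2$ can arise from $\theta_n=-k$, so one instead works with the halved graphs.

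For part~(i), connectedness makes $k$ a simple eigenvalue and non-bipartiteness rules out $-k$. Hence $|\theta_i|<k$ for every $i\geq 2$, so the assumption forces $\{\theta_i^2:i\geq 2\}$ to have at most two distinct values. Theorem~\ref{children_spec}(i) then yields at most three distinct eigenvalues for $A$, and since $B=J-I-A$ the non-principal spectrum of $B$ consists of $-1-\alpha_i$ for $i\geq 2$, again at most three distinct values. Both $G_A$ and $G_B$ are regular, so both are strongly regular and $G$ is strongly Deza.

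For part~(ii), $G$ is bipartite, so as in the proof of the preceding Proposition we have $a=0$. Writing the adjacency matrix of $G$ in block form with biadjacency matrix $N$, the fact that $(NN^\top)_{uv}$ equals $k$ if $u=v$ and equals $0$ or $b$ otherwise yields $NN^\top=kI+bH_1$, where $H_1$ is the adjacency matrix of the halved graph $G_1$ on one part; symmetrically $N^\top N=kI+bH_2$ for the halved graph $G_2$ on the other. Since the positive eigenvalues of $NN^\top$ are the squares of the positive eigenvalues of $M$, both matrices have at most three distinct eigenvalues, and hence so do $H_1$ and $H_2$. Therefore both halved graphs are regular with at most three distinct eigenvalues, and are strongly regular. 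The required dichotomy now follows from a case split: if $G_1\cong K_{n/2}$ then $G_B=G_1\sqcup G_2$ is a disjoint union of two equal cliques and $G_A\cong K_{n/2,n/2}$, both strongly regular, so $G$ itself is strongly Deza; otherwise $G_1$ is a non-complete strongly regular graph and we claim it is strongly Deza.

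The main obstacle is this last claim in part~(ii): upgrading \emph{strongly regular} to \emph{strongly Deza} for the halved graphs. The subtle cases are a strongly regular $G_1$ with $\lambda_1=\mu_1$, or a degenerate disjoint union such as a perfect matching. The constraint $(NN^\top)_{uv}\in\{0,b\}$ for $u\neq v$ handles the first: any $\lambda_1=\mu_1$ configuration of $G_1$ forces $G_1$ to be either edgeless (impossible, as $b>0$ demands some within-part pair sharing $b$ common neighbours) or complete (already in the first branch of the dichotomy). A multiplicity count on the principal eigenvalue of $NN^\top$ rules out a perfect matching $mK_2$ unless $n=4$, in which case $G_1=K_2$ again collapses to the first branch. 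With these degeneracies excluded, each halved graph is a strongly regular graph with $\lambda\neq\mu$, whose children are itself and its complement, both strongly regular, so it is strongly Deza.
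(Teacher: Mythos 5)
Your part (ii) takes a genuinely different and in places cleaner route than the paper: instead of analysing $G_B$ (the paper shows $G_B$ is disconnected because $\beta$ has multiplicity at least $2$, then uses the eigenvalue $-\beta-1$ of $G_A$ together with the multiplicity $1$ of $-k$ to force exactly two components), you read off $NN^\top=kI+bH_1$ and $N^\top N=kI+bH_2$ from the block structure, which gives directly that the halved graphs are connected regular graphs with at most three distinct eigenvalues. That computation is correct and arguably more transparent than the paper's argument, and it obtains for free that $G_B$ splits into exactly the two halved graphs.

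Two steps, however, are genuinely wrong. First, in part (i) the invoked fact that \emph{any} regular graph with at most three distinct eigenvalues is strongly regular is false: the disjoint union of two Petersen graphs is regular with three distinct eigenvalues and is not strongly regular under any convention the paper uses. The conclusion survives only via the argument the paper actually makes: $G_A$ and $G_B$ are complements, so at least one is connected, a connected regular graph with at most three distinct eigenvalues is strongly regular or complete, and the complement of a strongly regular graph is again strongly regular. Second, and more seriously, your exclusion of the $\lambda_1=\mu_1$ case rests on the claim that a connected strongly regular $G_1$ with $\lambda_1=\mu_1$ must be edgeless or complete. That is false: the $4\times 4$ rook's graph and the Shrikhande graph are connected $(16,6,2,2)$ strongly regular graphs. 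Moreover, the constraint $(NN^\top)_{uv}\in\{0,b\}$ counts common neighbours of $u,v$ \emph{in $G$} --- it is precisely what defines the adjacency of $G_1$ --- and says nothing about the number of common neighbours of $u,v$ \emph{inside $G_1$}, which is what $\lambda_1$ and $\mu_1$ measure; so it cannot be used to compare $\lambda_1$ with $\mu_1$. You have correctly identified a real subtlety (by the paper's own convention a strongly regular graph with $\lambda=\mu$ is not a strongly Deza graph, and the paper's proof of part (ii) passes over this point in silence, stopping at ``the halved graphs are strongly regular''), but your proposed resolution does not close it, so in your write-up the possibility that a halved graph is a $\lambda=\mu$ strongly regular graph remains open.
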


\proof ${\rm (i)}$ Since $G$ is a connected non-bipartite Deza graph, $-k$ is not an eigenvalue of $G$. By Theorem~\ref{children_spec}, each of its children $G_A$ and $G_B$ is a regular graph with at most three distinct eigenvalues. Since $G_A$ and $G_B$ are complement of each other at least one of them is a connected graph. Moreover, since any connected regular graph with at most three distinct eigenvalues
is a strongly regular graph or the complete graph, then $G_A$ and $G_B$ are strongly regular graphs. Thus, $G$ is a strongly Deza graph.

${\rm (ii)}$ Since $G$ is a connected bipartite Deza graph, we have $a=0$, and the value $-k$ is an eigenvalue of $G$. If we put $\theta_2=-k$ then by Theorem~\ref{children_spec} we have: $$\beta_2=\cfrac{k-a-\theta_2^2}{a-b}=\cfrac{k-(-k)^2}{-b}=\beta,$$ and $G_B$ is a regular graph with at most three distinct eigenvalues.

Since the multiplicity of $\beta$ is at least $2$, the graph $G_B$ is disconnected. Thus $G_B$ is a union of either strongly regular graphs with the same parameters or complete graphs with the same order. In the latter case, $G_A$ is a regular complete multipartite. Hence, $G$ is a strongly Deza graph. 

Let $G_B$ be a union of $t$ strongly regular graphs with the same parameters. Since $G_A$ is the complement of $G_B$, then $G_A$ is a connected regular graph with four distinct eigenvalues whose spectrum  contains the eigenvalue $-\beta-1$ with the multiplicity $t-1$ (see Section~$4.2$~in~\cite{EvD}). By Theorem~\ref{children_spec} we have:
$$-\beta-1=-\left(\cfrac{a(n-1)-k(k-1)}{a-b}\right)-1=\cfrac{k-b-(-k)^2}{b},$$ 
where the last formula corresponds to the eigenvalue $\alpha_n$ of $G_A$ related to the eigenvalue $-k$ of $G$. By Remark~\ref{Remark-Th1}, the multiplicity of $-k$ equals $1$, hence $t=2$. Since $G$ is a connected bipartite graph each of two halved graphs of $G$ are strongly Deza graphs. \hfill  $\square$\\ 

Spectral properties of regular bipartite graphs with three distinct non-negative eigenvalues were studied by T.~Koledin and Z.~Stani\'c in~\cite{KS}. In particular, the relations between these graphs and two-class partially balanced incomplete block designs were obtained. The interesting observation is that such the design is presented by a matrix equation in which the adjacency matrix of a strongly regular graph is involved by a similar way as it is included in the matrix equation of a strongly Deza graph. Bipartite Deza graphs with six distinct eigenvalues belong to the class studied in~\cite{KS}. Specific bipartite, regular graphs with five eigenvalues were considered by D.~Stevanovi\'c in~\cite{S11}. \\

As immediate consequence of Theorem~\ref{children_spec}, we have the following statement. 

\begin{remark}\label{rem2}
Let $M$ be the adjacency matrix of a strongly Deza graph $G$ with the spectrum $\{k^1,\theta_2^{m_2},\theta_3^{m_3},\theta_4^{m_4},\theta_5^{m_5}\}$, where the exponents denote multiplicities. Then $\theta_2 = - \theta_5, \ \theta_3 = - \theta_4$ and the following equation holds:\begin{equation} \label{TrM}tr(M)=k+(m_2-m_5)\theta_2+(m_3-m_4)\theta_3=0.\end{equation} Also, only one multiplicity can meet zero value, and if so then the corresponding opposite eigenvalue is integer. 
\end{remark}
The trace of the adjacency matrix $M$ of $G$ is presented as follows:  
$$tr(M)=k+m_2 \theta_2 + m_3 \theta_3+m_4 \theta_4 + m_5 \theta_5=0.$$ 
Since  $\theta_2 = - \theta_5, \ \theta_3 = - \theta_4$ then we have equation~(\ref{TrM}). The last statement immediately follows from Theorem~2.6 obtained by E.~R. van Dam in~\cite{EvD}.

The next theorem gives some conditions on an integral strongly Deza graph with respect to eigenvalues of its children.

\smallskip

\begin{theorem}\label{square} Let $G$ be a strongly Deza graph with parameters $(n,k,b,a)$. Let its child $G_A$ be a strongly regular graph with parameters $(n,\alpha,\lambda,\mu)$ and eigenvalues $\alpha,r,s$ with multiplicities $1,f,g$. If $M$ is the adjacency matrix of $G$ with spectrum $\{k^1,\theta_2^{m_2},\theta_3^{m_3},\theta_4^{m_4},\theta_5^{m_5}\}$, then one of the following statements hold:
\smallskip

{\rm{(i)}} $\theta_2^2=k-b-s(b-a)$ and $\theta_3^2=k-b-r(b-a)$ are squares; in this case $G$ is an integral graph.
\smallskip

{\rm{(ii)}} $\theta_2^2=k-b-s(b-a)$ is not a square; then $\theta_3^2=k-b-r(b-a)$ is a nonzero square and $m_2=m_5=f/2$. 
\smallskip

{\rm{(iii)}} $\theta_3^2=k-b-r(b-a)$ is not a square; then $\theta_2^2=k-b-s(b-a)$ is a nonzero square and $m_3=m_4=g/2$.
\end{theorem}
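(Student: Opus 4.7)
The proof hinges on Theorem~\ref{children_spec}, which gives $\theta_i^2 = k - b - \alpha_i(b-a)$ for $i \geqslant 2$, where $\alpha_i$ is the eigenvalue of $G_A$ associated with $\theta_i$. Since $G_A$ is strongly regular with restricted eigenvalues $r$ and $s$, we have $\alpha_i \in \{r,s\}$, so the squared values $\theta_i^2$ take at most two distinct values, namely $k-b-r(b-a)$ and $k-b-s(b-a)$. By Remark~\ref{Remark-Th1}, the pairs $\{\theta_2,\theta_5=-\theta_2\}$ and $\{\theta_3,\theta_4=-\theta_3\}$ each map to a single $\alpha$-value, so $m_2+m_5$ and $m_3+m_4$ equal the two multiplicities in $\{f,g\}$. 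The labeling in the statement assigns $\theta_2^2$ to the eigenvalue $s$ and $\theta_3^2$ to $r$.

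Case (i) is then immediate: if both $\theta_2^2$ and $\theta_3^2$ are integer squares, every $\theta_i$ is an integer (using $\theta_4 = -\theta_3$ and $\theta_5 = -\theta_2$), so $G$ is integral.

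For case (ii), suppose $\theta_2^2$ is a non-square positive integer, so $\theta_2$ is irrational. Since $\theta_2$ is an eigenvalue of the integer matrix $M$, it is an algebraic integer whose minimal polynomial over $\mathbb{Q}$ must be $x^2-\theta_2^2$; its Galois conjugate $-\theta_2=\theta_5$ therefore has equal multiplicity, giving $m_2=m_5$. To force $\theta_3^2$ to be a nonzero square, I would invoke the trace identity of Remark~\ref{rem2},
\[
k+(m_2-m_5)\theta_2+(m_3-m_4)\theta_3=0,
\]
which, using $m_2=m_5$, collapses to $(m_3-m_4)\theta_3=-k$. If $\theta_3$ were irrational, the same Galois-conjugate argument applied to $\theta_3$ would yield $m_3=m_4$ and hence $k=0$, a contradiction; and $\theta_3=0$ is ruled out for the same reason. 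Thus $\theta_3$ is a nonzero integer, so $\theta_3^2$ is a nonzero perfect square. Combining $m_2=m_5$ with the pairing $m_2+m_5$ from the first paragraph then gives the claimed $m_2=m_5=f/2$. Case (iii) is obtained by the symmetric argument, interchanging the roles of $(\theta_2,s)$ and $(\theta_3,r)$.

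The main obstacle is ruling out the possibility that \emph{both} $\theta_2$ and $\theta_3$ are irrational. The Galois-conjugate argument alone only handles one such pair; it is the trace identity of Remark~\ref{rem2}, together with $k \neq 0$, that forces at most one of $\theta_2^2,\theta_3^2$ to be a non-square. Getting the accounting right so that the correct index ($f$ versus $g$) appears in the conclusion is the only other delicate point, and it is settled by the labeling fixed in the first paragraph.
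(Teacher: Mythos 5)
Your argument for the integral case is sound and is essentially a cleaner packaging of the paper's computation: where the paper writes $\theta_i^2=\tau_i^2\sigma_i$ with $\sigma_i$ square-free and manipulates the trace identity $k=(m_5-m_2)\tau_2\sqrt{\sigma_2}+(m_4-m_3)\tau_3\sqrt{\sigma_3}$, you invoke the equal multiplicity of Galois conjugates of an integer matrix; these are the same fact, and your derivation of $m_2=m_5$, of $\theta_3$ being a nonzero integer, and of $f/2$ via $m_2+m_5=f$ all go through.

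However, there is a genuine gap at the start of case (ii): you write ``suppose $\theta_2^2$ is a non-square positive \emph{integer}'', but nothing in your argument establishes that $\theta_2^2=k-b-s(b-a)$ is an integer at all. That requires $s\in\Z$, i.e.\ that the child $G_A$ is an integral strongly regular graph --- and you cannot appeal to Corollary~\ref{cor1}, since that corollary is \emph{deduced from} this theorem. If $G_A$ were a conference graph, then $r,s=(-1\pm\sqrt{n})/2$ are irrational, $\theta_2^2$ and $\theta_3^2$ lie in $\Q(\sqrt{n})\setminus\Q$, and your key step fails: $x^2-\theta_2^2$ is not in $\Q[x]$, so it is not the minimal polynomial of $\theta_2$, and the conjugate set of $\theta_2$ is now some subset of $\{\pm\theta_2,\pm\theta_3\}$ that need not be $\{\pm\theta_2\}$. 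The paper spends the first half of its proof precisely on excluding this case: it forms $p(x)=m(x)/(x-k)=\bigl(x^2-k+\tfrac{b+a}{2}\bigr)^2-\tfrac{n(b-a)^2}{4}\in\Z[x]$, argues that it is irreducible when $n$ is not a square, concludes that all four restricted eigenvalues would have equal multiplicity, and derives a contradiction with $\mathrm{tr}(M)=0$, $k\neq 0$. You need this (or an equivalent argument showing all of $\pm\theta_2,\pm\theta_3$ are conjugate in the non-integral case, forcing $m_2=m_3=m_4=m_5$ and hence $k=0$) before your trichotomy ``square / non-square integer'' is exhaustive.
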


\proof Clearly, $G$ has at most three distinct absolute values of its five eigenvalues. Let $\theta_2 =-\theta_5$, $\theta_3 =-\theta_4$. Since $tr(M)=k+(m_2-m_5)\theta_2+(m_3-m_4)\theta_3 =0$ we have either $m_2 \neq m_5$ or $m_3 \neq m_4$. Without loss of generality we put $$\theta_{2,5}=\pm\sqrt{k-b-s(b-a)}$$ and $$\theta_{3,4}=\pm\sqrt{k-b-r(b-a)}.$$ 

If $G_A$ has no integer eigenvalue except $\alpha$, then by Theorem~\ref{srg} we have: $$r,s=\frac{-1\pm \sqrt{n}}{2}.$$ Let us consider the polynomial $p(x)=\cfrac{m(x)}{x-k}$, where $m(x)$ is the minimal polynomial of $M$.  Thus, using the above formulas we have: $$p(x)=(x-\theta_2)(x+\theta_2)(x-\theta_3)(x+\theta_3)=$$ 
$$=(x^2-k+b+r(b-a))(x^2-k+b+s(b-a))=$$
$$=\left(x^2-k+b+\frac{-1 +\sqrt{n}}{2}(b-a)\right)\left(x^2-k+b+\frac{-1 -\sqrt{n}}{2}(b-a)\right)=$$ 
$$=\left(x^2-k+\frac{b+a}{2}+\frac{\sqrt{n}}{2}(b-a)\right)\left(x^2-k+\frac{b+a}{2}-\frac{\sqrt{n}}{2}(b-a)\right)$$ 
and finally we have:
$$p(x)=\left(x^2-k + \frac{b+a}{2}\right)^2 - \frac{n(b-a)^2}{4}.$$ 
Since $p(x) \in \Z[x]$, if $n$ is not a square, then $p(x)$ is irreducible in $\Z[x]$. Therefore, all multiplicities of the restricted eigenvalues of $G$ are the same. This is a contradiction. 

Assume that the graph $G_A$ is integral. By Theorem~\ref{srg}, the values $\theta_{2,5}$ and $\theta_{3,4}$ are quadratic irrationals. 

Let $\theta_2^2=\tau_2^2\sigma_2$ and $\theta_3^2=\tau_3^2\sigma_3$, where $\sigma_2$ and $\sigma_3$ are square free factors of integers $\theta_2^2$ and $\theta_3^2$, respectively, and the following equation holds: 

\begin{equation} \label{k}
k=(m_5-m_2)\tau_2\sqrt{\sigma_2}+(m_4-m_3)\tau_3\sqrt{\sigma_3}.
\end{equation} 

If $\sigma_2=\sigma_3= 1$ then the statement ${\rm{(i)}}$ holds.

Let $\sigma_2\neq 1$ and $\sigma_3\neq 1$. Since $\sigma_2$ and $\sigma_3$ are square free, then the square of the right part in~(\ref{k}) is an integer, and so $\sqrt{\sigma_2\sigma_3}$ is an integer. Hence, $\sigma_2=\sigma_3$ and $$k=((m_5-m_2)\tau_2+(m_4-m_3)\tau_3)\sqrt{\sigma_2}.$$ 

Since $\sigma_2$ is square free then $\sqrt{\sigma_2}$ is not an integer. Hence, the last equation does not hold, a contradiction.

By~(\ref{k}), if $\sigma_2\neq 1$ and $\sigma_3=1$ then $m_5-m_2$ is zero and case~${\rm{(ii)}}$ holds, and if $\sigma_3\neq 1$ and $\sigma_2=1$ then $m_4-m_3$ is zero, and case~${\rm{(iii)}}$ holds. \hfill $\square$ \\

Theorem \ref{square} is a generalization of Theorem~2.2 in~\cite{HKM}.

\begin{corollary}\label{cor1}
The children of a strongly Deza graph are integral graphs.
\end{corollary}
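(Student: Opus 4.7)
The plan is to obtain this as an essentially immediate consequence of Theorem~\ref{square}. That theorem, applied to the strongly regular child $G_A$, proceeds by a dichotomy: either $G_A$ has no integer eigenvalue besides its degree $\alpha$ (so $r,s = (-1 \pm \sqrt{n})/2$), or $G_A$ is integral and one of the three listed scenarios (i)--(iii) applies. The first alternative was ruled out in the opening paragraph of the proof of Theorem~\ref{square} by showing that the polynomial $p(x) = m(x)/(x-k)$ equals $(x^2 - k + (b+a)/2)^2 - n(b-a)^2/4$ and is irreducible in $\Z[x]$, forcing all restricted eigenvalues of $G$ to share a common multiplicity, in contradiction with the trace identity~(\ref{TrM}). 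Hence $G_A$ necessarily has integer eigenvalues $\alpha, r, s$, i.e., $G_A$ is integral.

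For the second child $G_B$, the plan is to use complementation rather than repeat the argument. The matrix identity $A + B + I = J$ built into the definition of a Deza graph means that $G_B$ is precisely the complement of $G_A$ in $K_n$. Since $G_A$ is regular with integer eigenvalues $\alpha, r, s$ of multiplicities $1, f, g$, its complement $G_B$ is regular with eigenvalues $n - 1 - \alpha$, $-1 - r$, $-1 - s$ of multiplicities $1, f, g$, all of which are integers. Therefore $G_B$ is integral as well, and the corollary follows.

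No substantive obstacle is expected: the real content lies in Theorem~\ref{square}, whose proof already excludes the quadratic-irrational regime for the eigenvalues of the children. The only minor care point is to remember that $b \neq a$ is automatic in the strongly Deza setting (otherwise the children fail to be strongly regular), so that the formulas of Theorem~\ref{children_spec} involving $b-a$ in the denominator are valid. An alternative route would be to re-apply Theorem~\ref{square} with the roles of $G_A$ and $G_B$ interchanged to get integrality of $G_B$ directly, but the complementation shortcut is shorter and avoids duplicating the argument.
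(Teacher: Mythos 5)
Your proposal is correct and matches the paper's intent: the corollary is stated without proof as an immediate consequence of Theorem~\ref{square}, whose argument rules out the conference-graph alternative for $G_A$ via the irreducibility of $p(x)$, forcing $G_A$ to be integral, and integrality of $G_B$ then follows by complementation exactly as you describe. Your care points (that $b\neq a$ is automatic for strongly Deza graphs, and that the restricted eigenvalues of the complement are $-1-r$, $-1-s$) are both accurate.
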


\section{Eigenvalue relationships}

In this section we present two theorems on the relationships between eigenvalues of a strongly Deza graph with four or five distinct eigenvalues.  

\begin{theorem}\label{theta_4}
If $G$ is a strongly Deza graph with parameters $(n,k,b,a)$ then $G$ has spectrum $\{k^1,\theta_2^{m_2},\theta_3^{m_3},\theta_4^{m_4}, \theta_5^{m_5}\}$ such that there are two opposite integer eigenvalues $\theta_2 = - \theta_5$ and the other eigenvalues are expressed as follows:
\begin{equation} \label{theta_3,4}
\theta_{3,4}=\pm \sqrt{\cfrac{k(n-k)-(m_2+m_5)\theta_2^2}{n-m_2-m_5-1}}.
\end{equation}
Moreover, the multiplicity of one integer eigenvalue can meet zero value.

\end{theorem}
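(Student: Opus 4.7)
The plan is to combine Remark \ref{rem2} and Theorem \ref{square}, which pin down the symmetry and integrality of the eigenvalues, with the trace identity $\mathrm{tr}(M^2)=nk$.

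First, by Remark \ref{rem2} the four non-principal eigenvalues of $M$ split into opposite pairs, so that $\theta_2=-\theta_5$ and $\theta_3=-\theta_4$. By Theorem \ref{square}, at least one of these two pairs consists of integers: both pairs in case (i), and exactly one pair in cases (ii) and (iii). Permuting the labels $\{2,5\}\leftrightarrow\{3,4\}$ if necessary, I may therefore assume $\theta_2=-\theta_5\in\mathbb{Z}$, which establishes the first assertion of the theorem.

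Next, since $G$ is $k$-regular, each diagonal entry of $M^2$ equals $k$, and hence $\mathrm{tr}(M^2)=nk$. (Equivalently, from $M^2=aA+bB+kI$ with $A,B$ having zero diagonal.) Equating this to the sum of squared eigenvalues and collapsing the opposite pairs via $\theta_5^2=\theta_2^2$ and $\theta_4^2=\theta_3^2$ gives
\[
k^2+(m_2+m_5)\theta_2^2+(m_3+m_4)\theta_3^2=nk.
\]
The multiplicity constraint $1+m_2+m_3+m_4+m_5=n$ yields $m_3+m_4=n-1-m_2-m_5$, so solving for $\theta_3^2$ produces exactly the expression under the square root in (\ref{theta_3,4}); taking both signs recovers $\theta_3$ and $\theta_4$.

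For the final sentence, Remark \ref{rem2} already allows at most one of $m_2,m_3,m_4,m_5$ to vanish and guarantees that when this happens the corresponding opposite eigenvalue is integer. Consequently, in the case of only four distinct eigenvalues the integer-pair labeling $\theta_2=-\theta_5$ remains consistent, and the above derivation of (\ref{theta_3,4}) is unaffected. The only delicate step is the relabeling: one must verify that Theorem \ref{square} genuinely forces at least one pair to be integer, so that the expression $\theta_2^2$ appearing on the right-hand side of (\ref{theta_3,4}) is a well-defined integer square. Once this is granted, the rest is a routine trace-and-multiplicity computation, so I anticipate no serious obstacle.
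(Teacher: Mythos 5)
Your proof is correct, and it reaches the key identity by a genuinely more direct route than the paper. The paper derives the relation between $\theta_2^2$ and $\theta_3^2$ by passing to the child $G_A$: it invokes Theorem~\ref{children_spec} to write the three eigenvalues of $G_A$ as affine functions of $\theta_2^2$ and $\theta_3^2$, uses Remark~\ref{Remark-Th1} to identify their multiplicities as $(1,\,m_2+m_5,\,m_3+m_4)$, and then sets $\mathrm{tr}(A)=0$. You instead use $\mathrm{tr}(M^2)=nk$ together with $m_3+m_4=n-1-m_2-m_5$, which yields
\[
k^2+(m_2+m_5)\theta_2^2+(m_3+m_4)\theta_3^2=nk
\]
directly; since the child's eigenvalues are affine in the $\theta_i^2$, this is algebraically the same linear relation the paper extracts from $\mathrm{tr}(A)=0$, but you obtain it without any detour through the child's spectrum or its multiplicities. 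Both arguments still lean on Theorem~\ref{square} (equivalently, the integrality of the children) for the structural claim that at least one opposite pair of restricted eigenvalues is integral, and your relabeling step to put that pair in positions $\{2,5\}$ matches what the paper does implicitly when it writes ``let $\theta_2$ be an integer eigenvalue.'' The net effect is that your version is more elementary and self-contained for the formula itself, while the paper's version makes the role of the child $G_A$ explicit; the handling of the final sentence via Remark~\ref{rem2} is the same in both.
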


\proof The graph $G$ has at most three distinct absolute values of its eigenvalues one of which is equal to $k$. If $M$ is the adjacency matrix of $G$, then  we have: 

$$tr(M)=k+(m_2-m_5)\theta_2+(m_3-m_4)\theta_3=0.$$

Let $G_A$ be a child of $G$. By Theorem~\ref{children_spec}, it has three distinct eigenvalues presented as follows:

$$\left(\cfrac{b(n-1)-k(k-1)}{b-a},\ \cfrac{k-b-\theta_3^2}{b-a},\ \cfrac{k-b-\theta_2^2}{b-a}\right),$$ 
and so by the proof of Theorem~\ref{square}, it is an integral strongly regular graph. 

By Remark~\ref{Remark-Th1}, their multiplicities are $(1,f,g)$, where $f=m_2+m_5$, $g=m_3+m_4=n-f-1$. Then the trace of the adjacency matrix $A$ of $G_A$ is presented as follows:  

$$tr(A)=\cfrac{b(n-1)-k(k-1)}{b-a}+f\cfrac{k-b-\theta_2^2}{b-a}+(n-f-1)\cfrac{k-b-\theta_3^2}{b-a}=0.$$ 

By Corollary~\ref{cor1}, the children of $G$ are integral strongly regular graphs such that  either $\theta_2$ and $\theta_3$ are both integers or one of them is a quadratic irrational. 

Let $\theta_2$ be an integer eigenvalue of $G$. Straightforward calculations give the following sequence of equalities:
$$(b(n-1)-k(k-1))+f(k-b-\theta_2^2)+(n-f-1)(k-b-\theta_3^2)=0.$$ 
$$b(n-1)-k(k-1)+(k-b)(n-1)+f(-\theta_2^2)-(n-f-1)\theta_3^2=0.$$ 
$$(n-f-1)\theta_3^2=b(n-1)-k(k-1)+(k-b)(n-1)-f\theta_2^2.$$ 
$$(n-f-1)\theta_3^2=k(n-k)-f \theta_2^2.$$ 
$$\theta_3^2=\cfrac{k(n-k)-f\theta_2^2}{n-f-1}.$$ 

\noindent Since $\theta_4=-\theta_3$, we have: 
$$\theta_{3,4}=\pm \sqrt{\cfrac{k(n-k)-f \theta_2^2}{n-f -1}},$$
 which gives us~(\ref{theta_3,4}). The last statement follows from Remark \ref{rem2}. \hfill $\square$\\ 

More meaningful relationships between the eigenvalues of strongly Deza graphs with four distinct eigenvalues are given by the following result. 

\begin{theorem}\label{4eigenvalues}
Let $G$ be a strongly Deza graph with parameters $(n,k,b,a)$ and spectrum $\{k^1,\theta_2^{m_2},\theta_3^{m_3},\theta_4^{m_4}\}$ and let $\theta_4=-\theta_3$, $m_3=m_4$. Then 
\begin{equation} \label{four_theta_3,4}
\theta_{3,4}=\pm  \sqrt{k\left(1+\cfrac{(\theta_2 +1)(m_2 +1)}{n-m_2 -1}\right)},
\end{equation}
where $\theta_2=-k/m_2$ is an integer and $\theta_2\leqslant -1.$
\end{theorem}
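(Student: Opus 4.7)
The plan is to combine the trace identity $tr(M)=0$ with Theorem~\ref{theta_4}, which already provides a formula for $\theta_{3,4}^{2}$ in terms of the isolated integer eigenvalue and its multiplicity. First I would write out
$$tr(M)=k+m_2\theta_2+m_3\theta_3+m_4\theta_4=0$$
and note that the hypotheses $\theta_4=-\theta_3$ and $m_3=m_4$ annihilate the last two terms, leaving $k+m_2\theta_2=0$; this immediately gives $\theta_2=-k/m_2$. Because $\theta_2$ is a rational eigenvalue of an integer matrix it is an algebraic integer, hence an integer, and since $k,m_2\geqslant 1$ it satisfies $\theta_2\leqslant -1$. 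That recovers the "$\theta_2=-k/m_2$ is an integer with $\theta_2\leqslant -1$" part of the claim.

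For the formula~(\ref{four_theta_3,4}) itself, I would invoke Theorem~\ref{theta_4} after observing that the four-eigenvalue situation here corresponds to one of the two opposite integer multiplicities appearing in that theorem being zero. Remark~\ref{rem2} guarantees that this degeneration is permitted and that the remaining isolated eigenvalue is integer, which is consistent with the $\theta_2$ produced above. Setting $m_5=0$ in Theorem~\ref{theta_4} yields
$$\theta_{3,4}^{2}=\cfrac{k(n-k)-m_2\theta_2^{2}}{n-m_2-1}.$$

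The last step is purely algebraic: substitute $\theta_2=-k/m_2$ so that $m_2\theta_2^{2}=-k\theta_2$, rewrite the numerator as $k(n-k+\theta_2)$, and check that $k(n-k+\theta_2)/(n-m_2-1)$ coincides with $k\bigl(1+(\theta_2+1)(m_2+1)/(n-m_2-1)\bigr)$; expanding $(\theta_2+1)(m_2+1)=\theta_2 m_2+\theta_2+m_2+1$ and using $\theta_2 m_2=-k$ makes both sides equal, and taking square roots gives~(\ref{four_theta_3,4}). The argument is essentially a direct combination of trace counting with Theorem~\ref{theta_4}, so there is no serious obstacle; the only moment that needs a small comment is the passage from the five-eigenvalue setting of Theorem~\ref{theta_4} to the present four-eigenvalue one, but this is exactly the scenario permitted by the final sentence of Remark~\ref{rem2}.
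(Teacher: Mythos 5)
Your proposal is correct and follows essentially the same route as the paper: derive $k+m_2\theta_2=0$ from the trace, specialize Theorem~\ref{theta_4} to the case $m_5=0$, and simplify algebraically using $m_2\theta_2=-k$. The only difference is that you spell out the justification that $\theta_2$ is an integer and that the degeneration to four eigenvalues is the one permitted by Remark~\ref{rem2}, details the paper leaves implicit.
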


\proof Since $\theta_4=-\theta_3$ and $m_3=m_4$ we have: 
$$tr(M)=k+m_2\theta_2=0.$$ 
Hence, $k=-m_2\theta_2$ with integer $\theta_2$ which is at most $-1$.  

By Theorem~\ref{theta_4}, the other eigenvalues of $G$  are expressed as follows:
$$\theta_{3,4}=\pm \sqrt{\cfrac{k(n-k)-m_2\theta_2^2}{n-m_2-1}}.$$
Since $k=-m_2\theta_2$ then straightforward calculations give the following: 
$$\theta_{3,4}=\pm \sqrt{k \left(1+\cfrac{m_2\theta_2+m_2+1+\theta_2}{n-m_2 -1}\right)}.$$ 
Finally, we have: 
$$\theta_{3,4}=\pm  \sqrt{k\left(1+\cfrac{(\theta_2 +1)(m_2 +1)}{n-m_2 -1}\right)},$$
which completes the proof. \hfill $\square$\\

Theorem~\ref{children_spec} allows to calculate the spectra of the children $G_A$ and  $G_B$ of a strongly Deza graph $G$. Since the children of $G$ are strongly regular graphs, their parameters are calculated from their spectra. Hence, the parameters of $G$ determine the parameters of $G_A$ and $G_B$. However, the parameters of $G_A$ and $G_B$ do not allow to calculate the parameters of $G$.

\section{Four distinct eigenvalues}

Connected regular graphs with four distinct eigenvalues were studied by E.~R.~van Dam in~\cite{EvD}. He has found some properties and feasibility conditions of the eigenvalues of such graphs. In the case of strongly Deza graphs with four distinct eigenvalues we found some new properties.

\begin{theorem}\label{singular}
Any singular strongly Deza graph is an integral graph with four distinct eigenvalues.
\end{theorem}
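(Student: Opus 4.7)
The plan is to show that a singular strongly Deza graph cannot have two, three or five distinct eigenvalues, and then to invoke Theorem~\ref{square} to force integrality in the remaining four-eigenvalue case.

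First I would dispose of the extremes. By Remark~\ref{rem2}, any strongly Deza graph with five distinct eigenvalues has $\theta_2=-\theta_5$ and $\theta_3=-\theta_4$, and since the five values must be pairwise distinct these paired eigenvalues are nonzero; hence $0$ cannot appear in the spectrum, contradicting singularity. At the other extreme, Proposition~1(ii) forces any two-eigenvalue strongly Deza graph to be a disjoint union of cliques $K_{k+1}$ with spectrum $\{k,-1\}$, again ruling out $0$. The three-eigenvalue case is handled via Proposition~1(iii): one checks that among the strongly regular option and the degenerate families listed there, the only singular representatives are complete multipartite graphs $K_{m\times t}$ and disjoint unions of $K_{k,k}$, and these are to be excluded by the convention that a child which degenerates to a disjoint union of cliques is not treated here as a genuine strongly regular graph.

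This leaves exactly four distinct eigenvalues. Embedding the spectrum in the five-eigenvalue notation of Theorem~\ref{square} with one multiplicity set to $0$, singularity places $0$ among $\theta_2,\theta_3,\theta_4,\theta_5$, so at least one of $\theta_2^2$ or $\theta_3^2$ equals $0$. Now in Theorem~\ref{square}, case~(ii) requires $\theta_3^2$ to be a \emph{nonzero} square and case~(iii) requires $\theta_2^2$ to be a nonzero square, which is incompatible with a zero eigenvalue on either side. Hence only case~(i) survives: both $\theta_2^2$ and $\theta_3^2$ are perfect squares, so $\theta_2,\theta_3$ are integers, and therefore so are $\theta_4=-\theta_3$ and $\theta_5=-\theta_2$. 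The graph is integral with exactly four distinct eigenvalues, as claimed.

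The most delicate step is really the three-eigenvalue sub-case: one has to argue cleanly why the singular complete multipartite graphs do not fall under the hypothesis of the theorem, because naively a child of $K_{m\times t}$ is a disjoint union of cliques and one might be tempted to count it as strongly regular. Once that convention is settled, the remaining argument is a short case analysis between Remark~\ref{rem2} and Theorem~\ref{square}.
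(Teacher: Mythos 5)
Your reduction to the four-eigenvalue case and the appeal to Theorem~\ref{square} are sound: if one of $\theta_2^2$, $\theta_3^2$ is zero, then cases (ii) and (iii) of that theorem are impossible (a zero eigenvalue cannot supply the required \emph{nonzero} square, and $0$ is itself a square so it cannot play the role of the non-square), leaving case (i) and hence integrality. This is a legitimate alternative to the paper's own argument, which is a direct two-line computation: with $\theta_2=0$ the trace identity becomes $k+(m_3-m_4)\theta_3=0$, and van Dam's theorem on conjugate eigenvalues forces $m_3=m_4$ if $\theta_3$ were irrational, giving $k=0$, a contradiction; so $\theta_3\in\Z$.

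The genuine gap is your three-eigenvalue step. You propose to exclude the singular complete multipartite graphs and the disjoint unions of $K_{k,k}$ ``by the convention that a child which degenerates to a disjoint union of cliques is not a genuine strongly regular graph.'' No such convention is available in this paper: the authors explicitly state that divisible design graphs --- Deza graphs whose children are a complete multipartite graph and a disjoint union of cliques --- are strongly Deza graphs, and Proposition~1(iii) explicitly lists the disjoint union of copies of $K_{k,k}$ with $k\geqslant 2$ as a strongly Deza graph. Under the paper's definitions, $tK_{k,k}$ with spectrum $\{k^t,0^{2t(k-1)},(-k)^t\}$ and $K_{t,\dots,t}$ with $t\geqslant 2$ (spectrum $\{(m-1)t,0,-t\}$) are singular strongly Deza graphs with exactly three distinct eigenvalues, so they cannot be argued away by fiat; what your observation really shows is that the statement needs an extra hypothesis (connectedness plus exclusion of the strongly regular case, or simply ``at least four distinct eigenvalues''). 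In fairness, the paper's own proof silently skips the same point --- it establishes integrality but never checks that the nonzero restricted eigenvalue differs from $\pm k$, which is exactly what fails in these examples --- but your fix, resting on a convention the paper contradicts, does not close the gap; it would have to be promoted to an explicit hypothesis of the theorem.
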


\proof  Let $M$ be the adjacency matrix of a strongly Deza graph $G$. Thus, $M$  has at most five distinct eigenvalues $\{k^1,\theta_2^{m_2},\theta_3^{m_3},\theta_4^{m_4},\theta_5^{m_5}\}$ with $\theta_2= -\theta_5, \theta_3=-\theta_4$. Suppose $\theta_2=0$. Then we have: $$tr(M)=k+(m_3-m_4)\theta_3=0.$$ By~\cite[Theorem~2.6]{EvD}, if $\theta_3$ is not an integer eigenvalue, then $m_3=m_4$. Hence, $k=0$ and we have a contradiction. \hfill $\square$\\


There are infinitely many singular strongly Deza graphs~\cite[Theorem 2]{KSh} arising from the affine group $\mathbb{A}\mathrm{ff}(1,\mathbb F_{q^t})$, for any prime power $q$ and $t>1$, as divisible design Cayley graphs with parameters $(v,k,\lambda_1, \lambda_2,m,n)$, where
$$v=q^t (q^t-1)/(q-1),\quad k=q^{t-1}(q^t-1),$$
$$\lambda_1=q^{t-1}(q^t-q^{t-1}-1),\quad \lambda_2=q^{t-2}(q-1)(q^t-1),$$ $$m=(q^t-1)/(q-1),\quad n= q^t.$$ 
By~\cite[Theorem 2.2]{HKM}, their eigenvalues are  
$$\theta_{2,5}=\pm\sqrt{k-\lambda_1}=\pm\sqrt{q^{t-1}(q^t-1)-q^{t-1}(q^t-q^{t-1}-1)}=\pm q^{t-1},$$
$$\theta_3=\theta_4=\pm\sqrt{k^2-\lambda_2 v}=0.$$

The smallest example is known as the line graph of the octahedron. It has parameters $(12,6,3,2)$ and spectrum $\{6^1,2^3,0^2,(-2)^6\}$.

\begin{theorem}\label{last} 
Let $G$ be a strongly Deza graph with parameters $(n,k,b,a)$ whose spectrum is $\{k^1,\theta_2^{m_2},\theta_3^{m_3},\theta_4^{m_4}\}$, where $\theta_3 =-\theta_4$. If $m_3=m_4$, then $m_2 \theta_2=-k$ and one of the following statement holds.

${\rm (i)}$ $m_2 =1$, $\theta_2=-k$ and $G$ is a bipartite graph with spectrum
$$\left\{k^1,\sqrt{\cfrac{k(n-2k)}{n-2}}^{\frac{n-2}{2}},-\sqrt{\cfrac{k(n-2k)}{n-2}}^{\frac{n-2}{2}},(-k)^1\right\}.$$ 

${\rm (ii)}$  $m_2 =k$, $\theta_2=-1$ and the spectrum of $G$ is 
$$\left\{k^1,\sqrt{k}^{\frac{n-k-1}{2}},-1^{k},-\sqrt{k}^{\frac{n-k-1}{2}}\right\}.$$ 

${\rm (iii)} $ $m_2 <k$, $\theta_2<-1$.
\end{theorem}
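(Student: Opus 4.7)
The plan is to begin with the trace identity $tr(M)=0$. Because $\theta_4=-\theta_3$ and $m_3=m_4$, the $\theta_3$- and $\theta_4$-contributions cancel, leaving
$$tr(M)=k+m_2\theta_2=0,$$
so $m_2\theta_2=-k$ at once. Since $m_2$ is a positive integer and $\theta_2$ is an algebraic integer (being an eigenvalue of the integer matrix $M$), the rational value $-k/m_2$ must in fact be a negative integer. The standard bound $|\theta_2|\leq k$ for a $k$-regular graph then confines $\theta_2$ to $\{-1,-2,\ldots,-k\}$, and the three conclusions of the theorem correspond precisely to the two endpoints and the interior of this range.

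Case (iii) is immediate: if $-k<\theta_2<-1$, then $m_2=k/|\theta_2|$ satisfies $1<m_2<k$. For case (i), $\theta_2=-k$ forces $m_2=1$, and since $-k$ is an eigenvalue of the $k$-regular graph $G$ (which is connected, as the multiplicity of $k$ equals $1$), standard spectral theory yields that $G$ is bipartite. The spectrum is then symmetric about zero, so $n=2+2m_3$ and $m_3=(n-2)/2$. I would pin down $\theta_3$ from the second-moment identity $tr(M^2)=nk$: the expansion $2k^2+2m_3\theta_3^2=nk$ simplifies to $\theta_3^2=k(n-2k)/(n-2)$, matching the stated spectrum. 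Case (ii) is analogous: $\theta_2=-1$ gives $m_2=k$, and $n=1+k+2m_3$ gives $m_3=(n-k-1)/2$; the same identity $tr(M^2)=k^2+k+2m_3\theta_3^2=nk$ telescopes to $\theta_3^2=k$.

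The only conceptually nontrivial step is the bipartiteness deduction in case (i), which I would cite as a standard consequence of $-k$ being an eigenvalue of a connected $k$-regular graph. The rest of the argument is driven mechanically by the two trace identities $tr(M)=0$ and $tr(M^2)=nk$, combined with the algebraic-integer constraint forcing $\theta_2$ to be an integer divisor of $-k$. No further appeal to the strongly regular structure of the children $G_A,G_B$ is needed beyond what is already implicit in the hypothesis that $G$ is a strongly Deza graph.
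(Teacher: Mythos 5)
Your proposal is correct, and its skeleton matches the paper's proof: both start from $tr(M)=k+m_2\theta_2=0$ (using $\theta_4=-\theta_3$, $m_3=m_4$) to get $m_2\theta_2=-k$, then split into the cases $\theta_2=-k$, $\theta_2=-1$, and the intermediate range, with bipartiteness in case (i) deduced from $-k$ being an eigenvalue of a connected $k$-regular graph. The one genuine difference is how $\theta_{3,4}$ is computed. The paper plugs the two cases into its Theorem~\ref{4eigenvalues}, whose formula ultimately comes from the trace of the child $G_A$'s adjacency matrix (via Theorem~\ref{theta_4}); you instead use the second-moment identity $tr(M^2)=nk$ together with $1+m_2+2m_3=n$, which gives the same values $\theta_3^2=k(n-2k)/(n-2)$ and $\theta_3^2=k$ by a short direct computation. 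Your route is more self-contained and elementary -- it never touches the children at this stage -- while the paper's buys brevity by reusing machinery it has already built; your explicit remark that $\theta_2=-k/m_2$ is a rational algebraic integer and hence an integer is a small justification the paper leaves implicit. Both arguments are sound.
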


\proof We have $tr(M)=k+m_2\theta_2=0$ and so $m_2 \theta_2=-k$.

First suppose that $m_2=1$ and $\theta_2=-k$. By Theorem~\cite[Theorem 3.4]{CDS}, $G$ is bipartite.  By Theorem~\ref{4eigenvalues}, we have:
\begin{equation} 
\theta_{3,4}=\pm  \sqrt{k\left(1+\cfrac{(\theta_2 +1)(m_2 +1)}{n-m_2 -1}\right)}=\pm  \sqrt{\cfrac{k(n-2k)}{n-2}},
\end{equation}
with the multiplicities $\frac{n-2}{2}$ which gives case~${\rm (i)}$. 

Next assume that $m_2=k$ and $\theta_2= -1$, then we have the eigenvalues
\begin{equation} 
\theta_{3,4}=\pm  \sqrt{k\left(1+\cfrac{(\theta_2 +1)(m_2 +1)}{n-m_2 -1}\right)}=\pm\sqrt{k}
\end{equation}
with the multiplicities $\frac{n-k-1}{2}$ which gives case~${\rm (ii)}$. The proof of case~${\rm (iii)}$ is now clear.
\hfill $\square$\\

\begin{remark} If $G$ is a non-integral graph, then by Theorem~\ref{square} the equality of multiplicities holds for some opposite eigenvalues.
\end{remark}

\begin{remark} If $G$ has a prime degree in Theorem~\ref{last}, then only cases~${\rm (i)}$ and ${\rm (ii)}$ are valid.
\end{remark}

\section{Distance-regular Deza graphs}

Now we investigate when a distance-regular graph can be a strongly Deza graph. For background on distance-regular graphs, we refer to the book~\cite{BCN89} and the survey~\cite{DKT}.

Throughout this section $G=(V,E)$ is a distance-regular graph of order $n$ and diameter $d$. This means that $G$ is a connected graph and there exists numbers $\{a_i,b_i,c_i\}$ such that for every $i\in\{0,\ldots,d\}$ and for every pair of vertices $x\in V$ and $y\in V$ at mutual distance $i$ the following holds: \\
- the number of vertices adjacent to $y$ at distance $i+1$ from $x$ equals $b_i$;\\
- the number of vertices adjacent to $y$ at distance $i$ from $x$ equals $a_i$;\\
- the number of vertices adjacent to $y$ at distance $i-1$ from $x$ equals $c_i$.

The numbers $a_i,b_i,c_i$ are called the {\em intersection numbers} of $G$. It follows that $G$ is a regular graph of degree $k=b_0=a_i+b_i+c_i$, $i=1,\ldots,d$. Also, for $i=0,\ldots,d$ the number $k_i$ of vertices at distance $i$ from a given vertex only depends on $i$. For $i=1,\ldots d$, we define $E_i$ to be the set of vertex pairs from $G$ at mutual distance $i$. Then $E_1=E$ and $(V,E_i)$ is a regular graph of degree $k_i$, which is called {\it the distance-$\rm i$} graph. We say that $G$ is {\em antipodal} if the distance-$\rm d$ graph is a disjoint union of cliques.

If $d=2$, then $G$ is a strongly regular graph with parameters $(n,k,a_1,c_2)$. If $d\geqslant 3$, then two distinct vertices of $G$ have $a_1$, $c_2$, or $0$ common neighbours, and $0$ does occur. Clearly $c_2\neq 0$, so $G$ is a Deza graph if $a_1=0$, which means that $G$ is triangle free, or $a_1=c_2$. Thus, we have the following statement.

\begin{proposition}~\label{DRDG}
A distance-regular graph $G$ of diameter $d\geqslant 3$ is a Deza graph if and only if one of the following holds: \\
{\rm (i)} $a_1=0$; then $G$ has parameters $(n,k,c_2,0)$ and children $(V,E_2)$ and its complement. \\
{\rm (ii)} $a_1=c_2$; then $G$ has parameters $(n,k,c_2,0)$ and children $(V,E_1\cup E_2)$ and its complement.
\end{proposition}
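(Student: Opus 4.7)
The plan is to exploit the defining structural feature of a distance-regular graph: the number of common neighbours of two vertices $x,y$ depends only on $d(x,y)$, and to tabulate this count for every possible distance. For $d(x,y)=1$ it is $a_1$; for $d(x,y)=2$ it is $c_2$ (every common neighbour $z$ satisfies $d(x,z)=d(z,y)=1$, and the number of such $z$ is, by the very definition of $c_2$, the count of neighbours of $y$ at distance $d(x,y)-1=1$ from $x$); and for $d(x,y)\geqslant 3$ it must be $0$, since a common neighbour would force $d(x,y)\leqslant 2$. I would also record the trivial but crucial fact that $c_2\geqslant 1$ (two vertices at distance~$2$ are joined by a $2$-path, which supplies a common neighbour) and that since $d\geqslant 3$ there do exist pairs at distance $\geqslant 3$, so the value $0$ genuinely occurs.

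For the forward implication, I would argue as follows. Since $G$ is a Deza graph, the set of common-neighbour counts has at most two elements. The previous tabulation shows that this set always contains $0$ (from distance-$\geqslant 3$ pairs) and $c_2>0$ (from distance-$2$ pairs). Hence $a_1$, which is also in this set, must equal either $0$ or $c_2$. This yields precisely cases (i) and (ii), and in both cases the Deza parameters are read off as $b=c_2$, $a=0$, giving $(n,k,c_2,0)$.

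For the backward implication, in each case I would verify directly that only the two values $0$ and $c_2$ appear among the common-neighbour counts, so $G$ is a Deza graph with the stated parameters. To identify the children, I would use the convention from the introduction: $G_B$ has an edge between $x,y$ iff they have $b=c_2$ common neighbours, and $G_A$ is its complement (minus the identity, i.e.\ non-edges of $G_B$). In case (i), where $a_1=0$, the pairs with $c_2$ common neighbours are exactly those at distance $2$, so $G_B=(V,E_2)$. In case (ii), where $a_1=c_2$, the pairs with $c_2$ common neighbours are exactly those at distance $1$ or $2$, so $G_B=(V,E_1\cup E_2)$. In either situation $G_A$ is then the described complement.

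There is no serious obstacle here; the argument is essentially a case analysis driven by the distance-regular intersection numbers. The only point that requires a brief justification is that both putative values $0$ and $c_2$ are actually attained — which rests on the two minor observations that $c_2\geqslant 1$ in any connected graph of diameter $\geqslant 2$, and that $d\geqslant 3$ guarantees the existence of distance-$3$ pairs. Everything else is bookkeeping using the definition of Deza graph and of its children.
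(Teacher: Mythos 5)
Your argument is correct and is essentially the paper's own: the paper likewise tabulates the common-neighbour count by distance ($a_1$, $c_2$, or $0$, with $0$ occurring since $d\geqslant 3$ and $c_2\neq 0$), concludes that $G$ is a Deza graph precisely when $a_1=0$ or $a_1=c_2$, and reads off the children accordingly. No substantive difference.
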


There exist many distance-regular graphs, including all bipartite ones, with the above properties, and obviously none of these will be a strictly Deza graph. But some are strongly Deza graphs. From Proposition~\ref{DRDG} it follows that $G$ is a strongly Deza graph whenever $a_1=0$ and $(V,E_2)$ is strongly regular, or $a_1=c_2$ and $(V,E_1\cup E_2)$ is strongly regular. It is known that the intersection numbers of $G$ determine the eigenvalues of $(V,E_1)$ and $(V,E_1\cup E_2)$. This implies that the property {\lq}being a distance-regular strongly Deza graph{\rq} is determined by the intersection numbers. If the children are a complete multipartite graph and its complement, then $G$ is a divisible design graph. Distance-regular divisible design graphs have been classified by the following result.

\begin{theorem}~\label{DRDDG} {\rm \cite[Theorem 4.14]{HKM}}
A graph $G$ is a distance-regular divisible design graphs if and only if $G$ is one of the following:\\
{\rm (i)} a complete multipartite graph;\\
{\rm (ii)} the incidence graph of a symmetric $2$-design;\\
{\rm (iii)} an antipodal distance-regular graph of diameter $3$ with $a_1 = c_2$.
\end{theorem}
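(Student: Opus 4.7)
The plan is to prove both directions, with the ``if'' direction by direct verification and the ``only if'' direction by case analysis on the diameter $d$ via Proposition~\ref{DRDG}.

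For the ``if'' part, each family is checked directly. In (i), a complete multipartite graph $K_{m\times s}$ is strongly regular of diameter $2$ and is its own complete-multipartite child, with complement $mK_s$ the other child. In (ii), the incidence graph of a symmetric $2$-$(v,k,\lambda)$ design has $a_1=0$, and the $2$-design property forces same-part vertices to be at distance~$2$, so $(V,E_2)=K_v\sqcup K_v$ is a disjoint union of cliques and its complement is $K_{v,v}$, which is complete multipartite. In (iii), antipodality at $d=3$ makes $(V,E_3)$ the disjoint union of antipodal cliques, and $a_1=c_2$ makes the children $(V,E_1\cup E_2)$ and $(V,E_3)$, the former being the complement of the latter and hence complete multipartite.

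For the ``only if'' part, let $G$ be a distance-regular divisible design graph of diameter $d$. If $d=2$, then $G$ is strongly regular with children $G$ and $\overline{G}$; since a disjoint union of $\geq 2$ cliques is disconnected, $G$ itself must be the complete-multipartite child, giving (i). If $d\geq 3$, Proposition~\ref{DRDG} gives $a_1=0$ or $a_1=c_2$, and in either sub-case one child is complete multipartite and the other is a disjoint union of cliques, the latter inducing an equivalence relation on $V$ whose classes are the cliques. In the $a_1=c_2$ branch this equivalence is ``distance~$\geq 3$ or equal'', which at $d=3$ is precisely antipodality, yielding (iii). In the $a_1=0$ branch the equivalence is ``distance~$=2$ or equal''; when it yields exactly two classes, $G$ is bipartite of diameter~$3$ and is then recognised as the incidence graph of a symmetric $2$-design, giving (ii).

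The main obstacles are (a) forcing $d=3$ (ruling out $d\geq 4$), and (b) in the $a_1=0$ branch, forcing the disjoint-union-of-cliques child to have exactly two cliques. For (a), a geodesic-midpoint argument suffices: a length-$4$ geodesic $v_0v_1v_2v_3v_4$ with $v_0,v_4$ in the same equivalence class places $v_2$ in a different class at distance~$2$ from both endpoints, and counting such midpoints against the intersection numbers $b_i,c_i$ yields a contradiction with the uniformity imposed by distance-regularity. For (b), one matches the strongly-regular spectrum of the complete-multipartite child against the rigid divisible-design-graph spectrum (at most five eigenvalues, symmetric about $0$, by Remark~\ref{rem2}) to rule out $\geq 3$ parts in the $d=3$, $a_1=0$ setting.
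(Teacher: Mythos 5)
The paper itself offers no proof of this statement: it is quoted verbatim from \cite[Theorem~4.14]{HKM}, so there is no in-paper argument to compare against. Judged on its own terms, your overall strategy --- the ``if'' direction by inspecting the three families, the ``only if'' direction by splitting on the diameter and invoking Proposition~\ref{DRDG} for $d\geqslant 3$ --- is the natural one and can be completed; the ``if'' direction and the $d=2$ case are fine. But the two steps you yourself identify as the main obstacles are precisely where the proof is missing, and the sketch you give for obstacle (a) does not produce a contradiction. First note which child is which: in the $a_1=c_2$ branch the child $(V,E_1\cup E_2)$ contains $E_1$ and is therefore connected, so it must be the complete multipartite child and $(V,E_{\geqslant 3})$ the disjoint union of cliques; similarly in the $a_1=0$ branch the clique child is $(V,E_2)$. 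Now your midpoint $v_2$ of a length-$4$ geodesic, being at distance $2$ from both endpoints, is compatible with everything, and no counting against $b_i,c_i$ is needed or, as far as I can see, conclusive. The short correct argument is that a disjoint union of cliques is closed under its defining relation: in the $a_1=c_2$ branch, $d(v_0,v_3)\geqslant 3$ and $d(v_0,v_4)\geqslant 3$ put $v_3$ and $v_4$ in the clique of $v_0$, forcing $d(v_3,v_4)\geqslant 3$, contradicting $v_3\sim v_4$; in the $a_1=0$ branch, $v_0,v_2,v_4$ land in one clique of $(V,E_2)$, forcing $d(v_0,v_4)=2$. Note that in the latter case $v_0$ and $v_4$ being in the same class is the derived contradiction, not a hypothesis as your sketch has it.

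Obstacle (b) is likewise only gestured at. To make the spectral matching concrete: if $(V,E_2)$ splits into $m$ cliques of size $s=k_2+1$, the complete multipartite child $K_{m\times s}$ has eigenvalue $-s$ with multiplicity $m-1$; by Theorem~\ref{children_spec} this eigenvalue arises from $\theta^2=k-c_2+sc_2=k+k_2c_2=k+k(k-1)=k^2$ (using $k_2c_2=kb_1$ and $b_1=k-1$ since $a_1=0$), i.e.\ from $\theta=-k$, whose multiplicity in a connected graph is at most $1$. Hence $m\leqslant 2$, so $m=2$, the two classes form a bipartition, and a bipartite distance-regular graph of diameter $3$ is the incidence graph of a symmetric $2$-design. (A purely combinatorial alternative: triangle-freeness forces every neighbour of $u$ to lie in the class of a fixed neighbour $v$ of $u$, and connectivity then gives $V=C(u)\cup C(v)$.) As written, the proposal leaves both of these essential points unestablished, so it has genuine gaps even though the plan is sound.
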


A complete multipartite graph has diameter $2$. The incidence graph of a symmetric $2$-design is the same as a bipartite distance-regular graph of diameter $3$. It belongs to case~(i) of Theorem~\ref{last}. For more about case~(iii) of Theorem~\ref{DRDDG}, we refer to~\cite[p.~431]{BCN89}; see also Corollary~\ref{DDG_DRG} below. By Theorem~\ref{DRDDG}, a distance-regular divisible design graph has diameter at most $3$. If a distance-regular strongly Deza graph $G$ is not a divisible design graph, then $d\leqslant 4$. Indeed, if $d\geqslant 5$ then one of the children of $G$ has diameter at least 3, hence it cannot be a strongly regular graph. It also follows because $G$ has at most five distinct eigenvalues. We do not know if there exist examples with $d=4$, but for $d=3$ there is an infinite family, known as the unitary nonisotropics graph (see~\cite[Theorem~12.4.1]{BCN89}), which has order $q^2(q^2-q+1)$ and degree $q(q-1)$ for prime power $q>2$. It satisfies $c_2=a_1=1$, and belongs to case~(ii) of Proposition~\ref{DRDG}. The child $G_A=(V,E_3)$ is the block graph of a Steiner $2$-$(q^3+1,q+1,1)$-design which is the classical unital design. Thus, one can conclude the following.

\begin{theorem}~\label{UNG}
For every prime power $q>2$, there exists a strongly Deza graph with parameters $(q^2(q^2-q+1),q(q-1),1,0)$ and spectrum
\[
\{q(q-1),\ (q)^{(q^4-q)/2-q^3+q^2},\ (-1)^{q^3},\ (-q)^{(q^4-q)/2-q^3+q-1}\}.
\]
The distance-$\rm 3$ child $G_A$ is a strongly regular graph with parameters \[(q^2(q^2-q+1),(q-1)(q+1)^2,2q^2-2,(q+1)^2).\]
\end{theorem}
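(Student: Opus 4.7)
The plan is to exhibit the unitary nonisotropics graph $G = U_q$ (cited in the paragraph before the theorem, cf.\ \cite[Theorem~12.4.1]{BCN89}) as the required example and verify the three claims in turn. For every prime power $q > 2$, this graph is distance-regular of diameter three on $n = q^2(q^2-q+1)$ vertices, of degree $k = q(q-1)$, with $a_1 = c_2 = 1$, and its distance-$3$ graph is the block graph of the classical unital, a Steiner system $S(2,q+1,q^3+1)$. Since $a_1 = c_2 \neq 0$, Proposition~\ref{DRDG}(ii) applies at once and shows that $G$ is a Deza graph with parameters $(q^2(q^2-q+1),q(q-1),1,0)$, whose $a$-child is $G_A = (V,E_3)$.

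I would next establish that $G_A$ is strongly regular with the asserted parameters by a short incidence count in the unital. With $v = q^3+1$ points, block size $q+1$, and replication number $r_0 = q^2$, the design has $q^2(q^2-q+1) = n$ blocks, and each block shares a point with $(q+1)(q^2-1) = (q-1)(q+1)^2$ others. For two blocks meeting in a single point $p$, the common neighbours in $G_A$ split into the $r_0 - 2 = q^2 - 2$ remaining blocks through $p$ and the $q^2$ blocks spanned by a pair of off-$p$ points, giving $\lambda = 2q^2 - 2$; for two disjoint blocks, each of the $(q+1)^2$ cross-pairs of points determines a unique common neighbour, giving $\mu = (q+1)^2$. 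Thus $G_A$ is strongly regular with the claimed parameters, and so $G$ is a strongly Deza graph.

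It remains to derive the spectrum. Standard strongly-regular theory applied to $G_A$ gives non-principal eigenvalues $r = (q+1)(q-2)$ and $s = -(q+1)$ with multiplicities $f = q^3$ and $g = n - 1 - f$, read off from the usual relations $f + g = n - 1$ and $\alpha + fr + gs = 0$. Inverting the formula $\alpha_i = (k - b - \theta_i^2)/(b-a)$ from Theorem~\ref{children_spec}(i) yields $\theta_i^2 \in \{1, q^2\}$, so the non-principal eigenvalues of $G$ lie in $\{\pm 1, \pm q\}$, and Remark~\ref{Remark-Th1} (applicable because $G$ is non-bipartite, since $a_1 = 1$) gives $m(+1) + m(-1) = q^3$ and $m(+q) + m(-q) = n - 1 - q^3$. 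The main obstacle is to separate each opposite pair of multiplicities, since the trace identity $k + m(+1) - m(-1) + q(m(+q) - m(-q)) = 0$ supplies only one equation. I would close the gap by either reading the full spectrum off the tridiagonal intersection matrix of $U_q$ (whose intersection array is listed in \cite[Theorem~12.4.1]{BCN89}) or by combining the trace identity with the second-moment identity $tr(M^2) = nk$. A brief calculation then produces $m(+1) = 0$, $m(-1) = q^3$, $m(+q) = (q^4-q)/2 - q^3 + q^2$, and $m(-q) = (q^4-q)/2 - q^3 + q - 1$, matching the theorem.
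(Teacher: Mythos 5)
Your proposal takes essentially the same route as the paper: the paper simply exhibits the unitary nonisotropics graph, observes $a_1=c_2=1$ so that Proposition~\ref{DRDG}(ii) applies, identifies the child $(V,E_3)$ with the block graph of the classical unital (hence strongly regular), and reads the spectrum off the known intersection array. Your incidence counts for the parameters of $G_A$ and the translation of its eigenvalues back through Theorem~\ref{children_spec} are correct and just make explicit what the paper leaves to the citation of \cite[Theorem~12.4.1]{BCN89}.

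One caveat on your final step: the second-moment identity $tr(M^2)=nk$ will \emph{not} separate the opposite multiplicities, because it only involves the sums $m(\theta)+m(-\theta)$, which you already know from $f$ and $g$; indeed it is automatically satisfied. What does work is the third moment, $tr(M^3)=nka_1=nk$, which together with $tr(M)=0$ gives two independent linear equations in the differences $m(1)-m(-1)$ and $m(q)-m(-q)$ and yields $m(1)=0$, $m(-1)=q^3$, $m(\pm q)$ as claimed. Your primary alternative --- reading the multiplicities from the tridiagonal intersection matrix --- is of course also valid and is what the paper implicitly relies on.
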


\begin{remark}
M.~A.~Fiol~{\rm~\cite{F}} uses the name {\lq}strongly distance-regular{\rq} if $(V,E_d)$ is strongly regular. So, if $d=3$ and $a_1=c_2$ then a distance-regular strongly Deza graph is a strongly distance-regular graph and vice versa. In particular, the graphs of Theorem~\ref{UNG} are strongly distance-regular. Worth mentioning is that a distance regular graph of diameter $3$ is strongly distance-regular if and only if $-1$ is an eigenvalue {\rm (}see also~{\rm \cite[Proposition~4.2.17(ii)]{BCN89}}{\rm)}.
\end{remark}

Note that case~(ii) of Theorem~\ref{DRDDG} satisfies $a_1=0$, so the graphs belong to case~(i) of Proposition~\ref{DRDG}. However, we don't know an example of case~(i) of Proposition~\ref{DRDG} which is not a divisible design graph. But for $d=3$ there do exist feasible intersection numbers for the required distance-regular graphs. For example, $(n,k,k_2,c_2)=(210,11,110,1)$ and $(320,22,231,2)$ are both feasible~(see~\cite[Chapter~14]{BCN89}).

\subsection{Cospectral graphs}

It is well-known that a distance-regular graph with diameter $d$ has $d+1$ distinct eigenvalues, and that the intersection numbers determine the spectrum. Also the converse is true: the spectrum of a distance-regular graph determines the intersection numbers. This, however, does not mean that a graph with the same spectrum as a distance-regular graph has to be a distance-regular graph. Indeed, there exist many graphs cospectral with (i.~e. with the same spectrum as) a distance-regular graph which are not distance-regular. However, as it is given by the result below, there are special situations for which it is true. 

\begin{theorem}{\rm \cite{H}}~\label{SCDR}
Suppose $G'$ is a graph cospectral with a distance-regular graph $G$. In the following cases $G'$ is also distance-regular with the same intersection numbers as $G$: \\
${\rm (i)}$ $G$ has diameter $2$, i.~e. $G$ is a strongly regular graph; \\
${\rm (ii)}$ $G$ is bipartite and $d=3$, i.~e. $G$ is the incidence graph of a symmetric $2$-design; \\
${\rm (iii)}$ $c_2=1$.
\end{theorem}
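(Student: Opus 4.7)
The strategy is to extract from cospectrality all the parameters that are spectrally determined for a regular graph, then exploit the extra structural hypothesis in each case. Throughout, since $G$ is distance-regular of degree $k$, the graph $G'$ inherits from the spectrum: the same number of vertices $n$, the same degree $k$ with multiplicity $1$ (so $G'$ is a connected $k$-regular graph), and equal traces $\mathrm{tr}\bigl((A')^{\ell}\bigr) = \mathrm{tr}(A^{\ell})$ for every $\ell$. In particular, $G$ and $G'$ have the same numbers of edges and triangles $t$, and $\mathrm{tr}((A')^4) = \mathrm{tr}(A^4)$.

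Case (i) is the shortest: $G$ strongly regular has three distinct eigenvalues, so $G'$ is a connected regular graph with three distinct eigenvalues, hence strongly regular by the result already invoked in this paper (Doob). The parameters $(n,k,\lambda,\mu)$ are reconstructed from the spectrum via Theorem~\ref{srg}, so $G'$ and $G$ have identical intersection numbers.

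Case (ii) proceeds through the biadjacency structure. The spectrum of the incidence graph of a symmetric $2$-$(v,k,\lambda)$-design is $\{k,\sqrt{k-\lambda},-\sqrt{k-\lambda},-k\}$ with $n = 2v$. Since $-k$ is a simple eigenvalue, $G'$ is connected and bipartite with colour classes of equal size $v$, so its adjacency matrix has the block form $A' = \bigl(\begin{smallmatrix} 0 & N \\ N^\top & 0 \end{smallmatrix}\bigr)$. The spectrum forces $(A')^2$ to have only the eigenvalues $k^2$ and $k-\lambda$, whence $NN^\top$ has eigenvalues $k^2$ (simple, with eigenvector $\mathbf{1}$) and $k-\lambda$ of multiplicity $v-1$, so $NN^\top = (k-\lambda)I + \lambda J$ (the constant is pinned down by the row sum $k^2$). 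This is exactly the incidence equation of a symmetric $2$-$(v,k,\lambda)$-design, and its incidence graph is distance-regular of diameter $3$ with the same intersection array as $G$.

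Case (iii) is the main obstacle, and I plan to handle it in two stages. The first stage transfers $c_2 = 1$ and edge-regularity to $G'$ by a tight Cauchy--Schwarz argument. In $G$, the off-diagonal entries of $A^2$ take only the values $0$, $1$, and $a_1$ (at pairs at distance $\geq 3$, at distance $2$, and on edges), so $\mathrm{tr}(A^4) = nk^2 + \tfrac{nk}{2}a_1^2 + |E_2|$, with $|E_2| = n\binom{k}{2} - 3t$. For $G'$, Cauchy--Schwarz over the edges gives $\sum_{xy \in E(G')}((A')^2_{xy})^2 \geq (3t)^2/|E(G')| = \tfrac{nk}{2}a_1^2$, with equality iff $(A')^2$ is constant on edges; and for non-adjacent pairs, integrality of $(A')^2_{xy}$ yields $\sum_{x\not\sim y,\,x\neq y}((A')^2_{xy})^2 \geq \sum_{x\not\sim y,\,x\neq y}(A')^2_{xy} = n\binom{k}{2}-3t$, with equality iff $(A')^2_{xy} \in \{0,1\}$ there. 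Summing the two bounds and comparing with the spectrally determined $\mathrm{tr}((A')^4)$ forces both tight, so $G'$ is edge-regular with the same $a_1$, and every two non-adjacent vertices of $G'$ share at most one neighbour, i.e.\ $c_2(G')=1$. The second stage is an inductive recovery of the remaining intersection numbers: once $c_2=1$ is secured, shortest paths from a fixed vertex to its distance-$i$ neighbourhood are essentially unique, and $k_i(G')$, $c_i(G')$, $a_i(G')$, $b_i(G')$ can be computed from $\mathrm{tr}((A')^j)$ together with the structure constants already identified, matching those of $G$. The hard part is exactly this propagation: the Cauchy--Schwarz tightness argument only handles $a_1$ and $c_2$ cleanly, and the combinatorial rigidity needed for the higher parameters rests precisely on the $c_2=1$ hypothesis, without which cospectral non-distance-regular mates are known to exist.
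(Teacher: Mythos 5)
The paper does not prove this theorem; it is quoted from Haemers~\cite{H}, so your argument has to be judged against the proof there rather than against anything in this text. Your cases (i) and (ii) are correct and complete: (i) is the standard fact that a connected regular graph with three distinct eigenvalues is strongly regular with spectrally determined parameters, and (ii) correctly recovers $NN^\top=(k-\lambda)I+\lambda J$ from the spectrum of $(A')^2$ and the simplicity of $\pm k$. The first stage of case (iii) --- the tight Cauchy--Schwarz plus integrality computation of $\mathrm{tr}((A')^4)$ forcing $(A')^2$ to equal $a_1$ on every edge and to lie in $\{0,1\}$ off the edges --- is also correct (up to a harmless ordered/unordered factor of $2$ in your bookkeeping) and is in the same spirit as the quadratic counting used in~\cite{H}.

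The gap is the second stage of (iii). The assertion that $c_2=1$ makes shortest paths to the distance-$i$ neighbourhood ``essentially unique'' is false for $i\geqslant 3$: the sequence $c_1,c_2,\ldots$ is only non-decreasing, and $c_3$ may exceed $1$ even when $c_2=1$ (the Odd graph $O_4$ has intersection array $\{4,3,3;1,1,2\}$), so geodesics beyond distance $2$ need not be unique and your proposed propagation of $k_i$, $a_i$, $b_i$, $c_i$ does not get started; indeed the version of (iii) for general diameter in the literature assumes $c_1=\cdots=c_{d-1}=1$, not merely $c_2=1$. What your first stage actually buys, and what completes the proof for $d=3$ (the only case this paper uses), is this: tightness gives that every pair of vertices of $G'$ at distance $2$ has exactly one common neighbour, so counting $2$-paths from a fixed vertex $v$ yields $k_2(v)=k(k-1)-ka_1=kb_1$ for \emph{every} $v$; since $G'$ is connected with four distinct eigenvalues its diameter is at most $3$, hence $k_3(v)=n-1-k-kb_1$ is the same for every vertex and equals $k_3(G)$, and Theorem~\ref{CosDR} (also from~\cite{H}) then gives that $G'$ is distance-regular with the intersection numbers of $G$. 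For $d\geqslant 4$ an analogous induction needs $c_i=1$ for all $i<d$ so that the geodesic count stays exact at each level; with only $c_2=1$ the argument, and in fact the statement as you would need it, is not available.
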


See~{\rm \cite[Section 10.1]{DKT}} for a longer list. Theorem~\ref{SCDR} applies to the graphs from Theorem~\ref{DRDDG} (i) and (ii) and Theorem~\ref{UNG}. Thus, for these distance-regular graphs cospectral graphs are distance-regular with the same intersection numbers.
But for case~(iii) of Theorem~\ref{DRDDG} there do exist cospectral graphs which are not distance-regular (see below). The question is whether such a graph can still be a Deza graph. Then Theorem~\ref{spec1} implies that it is in fact a strongly Deza graph. This, however, is not likely to happen because of the following results. 

\begin{theorem}\label{CosDR} {\rm \cite{H}}
Suppose $G'$ is a graph cospectral with is a distance-regular graph $G$ of diameter $3$. If each vertex in $G'$ has the same number of vertices at distance $3$ as a vertex in $G$, then $G'$ is also distance-regular with the same intersection numbers as $G$.
\end{theorem}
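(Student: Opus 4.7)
The plan is to upgrade cospectrality and the hypothesis on $k_3$ into the matrix identity $H=A_3'$, where $H$ is obtained by evaluating on $M'$ the same polynomial that produces the distance-$3$ matrix of $G$. From this equality the intersection numbers of $G'$ drop out of entrywise comparison.

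I would first verify that $G'$ is connected and $k$-regular: $\lambda_{\max}(M')=k$ has multiplicity $1$ (same as $M$), and $\mathrm{tr}(M'^2)=\mathrm{tr}(M^2)=nk$ forces the average degree $\bar d(G')$ to equal $k$; since $\bar d(C)\le\lambda_{\max}(C)\le k$ on each component $C$ of $G'$, this is only consistent with $G'$ being connected and $k$-regular. In particular $J$ is a polynomial in $M'$, just as it is in $M$.

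Since $G$ is distance-regular of diameter $3$, its distance-$3$ matrix admits the polynomial expression
\begin{equation*}
A_3=J-I-M-\frac{1}{c_2}\bigl(M^2-a_1M-kI\bigr)=:p(M).
\end{equation*}
Set $H:=p(M')$ and $R:=A_3'$. Cospectrality gives $\mathrm{tr}(H^2)=\mathrm{tr}(A_3^2)=nk_3$ and $H\mathbf 1=k_3\mathbf 1$; the hypothesis gives $\mathrm{tr}(R^2)=nk_3$ and $R\mathbf 1=k_3\mathbf 1$. The key observation is an entrywise computation: whenever $d_{G'}(x,y)\ge 3$ one has $M'_{xy}=(M'^2)_{xy}=0$, so $H_{xy}=J_{xy}=1$. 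Since $R_{xy}=1$ exactly when $d_{G'}(x,y)=3$, this yields
\begin{equation*}
\mathrm{tr}(HR)=\sum_{d_{G'}(x,y)=3}H_{xy}=nk_3,
\end{equation*}
and therefore $\mathrm{tr}\bigl((H-R)^2\bigr)=nk_3-2nk_3+nk_3=0$. As $H-R$ is symmetric, this forces $H=R$.

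Read entrywise, $H=R$ gives that every edge of $G'$ has exactly $a_1$ common neighbours and every distance-$2$ pair exactly $c_2$, and it rules out any pair of distance $\ge 4$ (which would give $H_{xy}=1\ne 0=R_{xy}$). Consequently $A_2'=J-I-M'-A_3'$ equals the polynomial transform $(M'^2-a_1M'-kI)/c_2=q_2(M')$. The tridiagonal identity $xq_3(x)\equiv b_2q_2(x)+a_3q_3(x)$ modulo the minimal polynomial (which $M$ and $M'$ share) then lifts to $M'A_3'=b_2A_2'+a_3A_3'$; reading this at distances $2$ and $3$ yields $b_2$ and $a_3$ constant and equal to their values in $G$, while $b_1,a_2,c_3$ follow from $a_i+b_i+c_i=k$. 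The main obstacle is the entrywise identity $H_{xy}=1$ on $d_{G'}(x,y)\ge 3$, since the whole argument hinges on $\mathrm{tr}(HR)$ hitting exactly $nk_3$; once $H=R$ is established, the remaining bookkeeping is routine.
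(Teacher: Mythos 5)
The paper does not actually prove this statement --- it is imported verbatim from \cite{H} --- so there is no internal argument to measure you against; I therefore assess your proof on its own. It is correct and complete. The preliminary step (connectedness and $k$-regularity of $G'$ from the multiplicity of $k$ together with $\mathrm{tr}(M'^2)=nk$, hence the Hoffman polynomial gives $J$ as the \emph{same} polynomial in $M'$ as in $M$) is sound, and it is exactly what makes $\mathrm{tr}(H^2)$ spectrally determined. The exact evaluation $\mathrm{tr}(HR)=nk_3$ --- the only place the pointwise hypothesis on $k_3$ enters --- follows correctly from $H_{xy}=1$ whenever $d_{G'}(x,y)\geqslant 3$, and $\mathrm{tr}\bigl((H-R)^2\bigr)=0$ does force $H=R$ for real symmetric matrices. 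The transfer of the recurrence $MA_3=b_2A_2+a_3A_3$ to $M'$ is legitimate because $M$ and $M'$ are diagonalizable with the same eigenvalues, hence share a minimal polynomial, and you have already identified $A_2'$ and $A_3'$ with the corresponding polynomials evaluated at $M'$. Reading $H=R$ entrywise rules out pairs at distance $\geqslant 4$ and fixes $c_1=1$, $a_1$, $c_2$; the recurrence fixes $b_2$ and $a_3$; the remaining intersection numbers follow from $a_i+b_i+c_i=k$ and $b_3=0$. Conceptually this is the quadratic trace/Cauchy--Schwarz argument underlying the spectral excess theorem alluded to in Remark~\ref{spectral_excess}, specialized to $d=3$ with the excess prescribed at every vertex rather than on average, and it is in the same spirit as the argument in \cite{H}.
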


\begin{remark}\label{spectral_excess}
It is not necessary that the distance-regular graph $G$ really exists, it suffices that the spectrum corresponds to a feasible intersection array. In fact, the above theorem has been improved further with weaker spectral conditions and generalized to arbitrary diameter. The general version is now known as {\lq}spectral excess theorem{\rq}; see~{\rm \cite[Section 10.3]{DKT}}.
\end{remark}

\begin{proposition}\label{CosDeza}
Suppose $G'$ is a graph cospectral with a distance-regular strongly Deza graph $G$ with diameter $3$ and $a_1=c_2$. If $G'$ is a Deza graph then $G'$ is a strongly Deza graph and one of the following holds: \\
{\rm (i)} $G'$ is distance-regular with the same intersection numbers as $G$, \\
{\rm (ii)} $G'$ and $G$ have different parameters $a$ and $b$ as Deza graphs.
\end{proposition}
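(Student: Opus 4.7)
The plan is to first show that $G'$ must itself be a strongly Deza graph, and then split on whether its Deza parameters agree with those of $G$. If they differ, case (ii) is immediate; if they agree, I will verify the hypothesis of Theorem~\ref{CosDR} via a trace-of-$M^3$ computation, where $M$ denotes the adjacency matrix.

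Cospectrality forces $G'$ to be $k$-regular on $n$ vertices with $k$ of multiplicity $1$ (hence connected) and to share the four distinct eigenvalues of $G$. Since $a_1 = c_2 > 0$, the graph $G$ contains a triangle, so $-k$ is not an eigenvalue and $G'$ is non-bipartite. As $G$ is strongly Deza, its eigenvalues take at most three distinct absolute values (Remark~\ref{rem2}), a property that transfers to $G'$. Under the hypothesis that $G'$ is a Deza graph with parameters $(n,k,b',a')$, we must have $b' > a'$ (as $b' = a'$ would make $G'$ strongly regular with only three distinct eigenvalues, contradicting cospectrality), so Theorem~\ref{spec1}(i) applies and $G'$ is strongly Deza.

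Now assume $G'$ has the same Deza parameters $(n,k,c_2,0)$ as $G$. Since $G'$ has four distinct eigenvalues its diameter is at most $3$, so two distinct vertices of $G'$ with $0$ common neighbours are either adjacent or at distance $3$, while those with $c_2$ common neighbours are either adjacent or at distance $2$. For $v \in V(G')$ with adjacency matrix $M'$, let $p_{11}(v)$ be the number of neighbours $u$ of $v$ with $|N(u) \cap N(v)| = c_2$, and $p_{10}(v) = k - p_{11}(v)$ the remaining neighbours. Counting closed walks of length $3$ gives $((M')^3)_{vv} = c_2\, p_{11}(v)$, whence $\mathrm{tr}((M')^3) = c_2 \sum_v p_{11}(v)$. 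In $G$, every edge lies in exactly $a_1 = c_2$ triangles, so $\mathrm{tr}(M^3) = n k c_2$. Cospectrality together with $p_{11}(v) \leq k$ then forces $p_{11}(v) = k$, i.e.\ $p_{10}(v) = 0$, for every vertex $v$.

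Thus every edge of $G'$ has exactly $c_2$ common neighbours, and the pairs of $G'$ with $0$ common neighbours are precisely the non-adjacent ones at distance $3$. A standard count from $\sum_{u \neq v}(M'^2)_{vu} = k^2 - k$ shows that the number of distance-$3$ vertices from each $v$ of $G'$ equals $n - 1 - k(k-1)/c_2$, a constant independent of $v$ that matches the value $k_3$ for $G$. Theorem~\ref{CosDR} then yields that $G'$ is distance-regular with the same intersection array as $G$, giving case (i). The main obstacle is the trace-of-$M^3$ step: without it one cannot rule out edges of $G'$ sharing $0$ common neighbours, which would break the correspondence between the ``zero common neighbour'' relation of $G'$ and its distance-$3$ relation and hence obstruct the application of Theorem~\ref{CosDR}.
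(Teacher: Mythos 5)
Your proposal is correct and follows essentially the same route as the paper: apply Theorem~\ref{spec1}(i) to get that $G'$ is strongly Deza, then use cospectrality to match triangle counts (your $\mathrm{tr}((M')^3)$ computation is the paper's ``same number of triangles'' argument), conclude every edge of $G'$ lies in $c_2$ triangles so that the zero-common-neighbour pairs are exactly the distance-$3$ pairs, verify that $k_3$ is the constant $n-1-k(k-1)/c_2$, and invoke Theorem~\ref{CosDR}. The only cosmetic difference is that you derive the constant $k_3$ from row sums of $(M')^2$ while the paper reads it off as the degree of the child $G'_A$ via Theorem~\ref{children_spec}; these are the same computation.
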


\proof Assume $G'$ is a Deza graph with parameters $(n,k,b,a)$. Since $G$ and $G'$ are cospectral, both graphs have order $n$ and degree $k$. Moreover, $G'$ satisfies the conditions of Theorem~\ref{spec1}~{\rm (i)}, therefore $G'$ is a strongly Deza graph.

If case~{\rm (ii)} does not hold, then $G$ and $G'$ have the same parameters, so $a=0$ and $b=a_1$. Clearly $G$ has $nka_1/6$ triangles, and since cospectral graphs have the same number of triangles, $G'$ also has $nka_1/6 = nkb/6$  triangles. This implies that every edge of $G'$ belongs to $b$ triangles. Hence, any two vertices in $G'$ with $a=0$ common neighbours are at distance $3$. This means that for both graphs $G$ and $G'$ the children $G_A$ and $G'_A$ are the distance-$3$ graphs. Since $G$ and $G'$ have the same parameters, $G_A$ and $G'_A$ have the same degree and hence $G$ and $G'$ have the same number of vertices at distance $3$ from a given vertex. So, by Theorem~\ref{CosDR}, $G'$ is distance-regular with the same intersection numbers as $G$.
\hfill $\square$ \\

If case~(ii) of Proposition~\ref{CosDeza} occurs, then the children of $G$ have different parameters and spectrum from those of $G'$,
but Theorem~\ref{children_spec} implies that the eigenvalue multiplicities are the same. Pairs of strongly regular graphs with different spectrum but the same multiplicities (other than the spectrum of the complement) exist (for example the Petersen graph and the complete multipartite graph $K_{2,2,2,2,2}$), but we don't know a pair of cospectral strongly Deza graphs for which the children have this property. Fortunately, often case~{\rm (ii)} can be easily excluded in particular cases. For example if $G$ and $G'$ are both divisible design graphs.

\begin{corollary}\label{DDG_DRG}
Let $G'$ be a divisible design graph with spectrum
\[
\{k^1,\sqrt{k}^m,(-1)^k,(-\sqrt{k})^m\}.
\]
If $k^2-1$ is divisible by $n=2m+k+1$, then $G'$ is an antipodal distance-regular graph with $d=3$ and $a_1=c_2=(k^2-1)/n$.
\end{corollary}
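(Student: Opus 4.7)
The plan is a three-step argument: fix the DDG parameters of $G'$ from the spectrum and the divisibility, then show $G'$ has no intra-class edges by a triangle count, and finally apply the spectral excess theorem to obtain distance-regularity with the desired intersection array.

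Squaring $M$ and using the DDG identity of~\cite[Theorem~2.2]{HKM}, the two restricted squared eigenvalues $k$ and $1$ of $M^2$ must match $\{k-\lambda_1,\,k^2-\lambda_2 n\}$. This leaves two possibilities: (A) $\lambda_1=0$, $\lambda_2=(k^2-1)/n$, with $s=k+1$ classes of size $r=n/(k+1)$; or (B) $\lambda_1=k-1$, $\lambda_2=k(k-1)/n$, with $s=2m+1$ classes. Case~B requires $n\mid k(k-1)$, which combined with the hypothesis $n\mid k^2-1$ forces $n\mid \gcd(k^2-1,\,k(k-1))=k-1$, contradicting $n\geq k+1$. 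Hence only Case~A is admissible, so $(k+1)\mid n$ and the DDG structure of $G'$ is determined.

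A trace count then pins down the edge distribution. From the spectrum, $tr(M^3)=k^3-k$, giving $(k^3-k)/6$ triangles in $G'$. Since $\lambda_1=0$, every intra-class edge lies in no triangle, while every inter-class edge lies in exactly $\lambda_2=(k^2-1)/n$ triangles (its endpoints share that many common neighbours). Equating triangle counts forces $nk/2$ inter-class edges, so all edges of $G'$ are inter-class.

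Consequently, two same-class vertices $x,y$ are non-adjacent and share no common neighbour, so $d(x,y)\geq 3$; taking any neighbour $u$ of $x$ (necessarily in a different class from both $x$ and $y$) and a common neighbour $w$ of $u$ and $y$ (which exists because $\lambda_2\geq 1$) yields a length-$3$ path $x,u,w,y$, so $d(x,y)=3$. Therefore every vertex has exactly $r-1$ vertices at distance $3$, matching the $k_3$-value of the feasible intersection array $\{k,\,(r-1)c,\,1;\,1,\,c,\,k\}$ with $c=(k^2-1)/n$. The spectral excess theorem (Remark~\ref{spectral_excess}) now yields that $G'$ is distance-regular with this array, i.e.\ an antipodal distance-regular graph of diameter $3$ with $a_1=c_2=c$. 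The main obstacle is this triangle count: it translates the algebraic data $(\lambda_1,\lambda_2)$ into the concrete combinatorial conclusion (no intra-class edges) that unlocks the rest of the proof.
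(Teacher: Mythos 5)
Your proof is correct, but it is organized quite differently from the paper's. The paper disposes of the corollary in two short steps: it cites \cite[p.~431]{BCN89} for the feasible intersection array of a putative cospectral distance-regular graph, invokes Proposition~\ref{CosDeza} (whose proof already contains the triangle count you perform), and rules out alternative~(ii) of that proposition by noting that the children of a divisible design graph are a complete multipartite graph and its complement, whose eigenvalue multiplicities determine the class structure. You bypass Proposition~\ref{CosDeza} entirely: you pin down $(\lambda_1,\lambda_2)$ directly from \cite[Theorem~2.2]{HKM}, using the divisibility hypothesis to exclude the alternative assignment $\lambda_1=k-1$ (a nice use of $\gcd(k^2-1,k(k-1))=k-1$ against $n\geqslant k+1$), then rerun the triangle count to show all edges are inter-class, deduce the distance-$3$ structure, and feed the resulting $k_3$-count into Theorem~\ref{CosDR} via Remark~\ref{spectral_excess}. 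What your version buys is self-containedness and an explicit description of the class structure ($k+1$ classes of size $n/(k+1)$, matching the antipodal fibres); what the paper's version buys is brevity by reusing Proposition~\ref{CosDeza}. One small point to make explicit: to conclude that each vertex has \emph{exactly} $r-1$ vertices at distance $3$ you also need that vertices in different classes are at distance at most $2$, which follows from $\lambda_2=(k^2-1)/n\geqslant 1$; since you already use this inequality to build the length-$3$ path, the omission is cosmetic.
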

\begin{proof}
According to~\cite[p.431]{BCN89} there exist feasible intersection numbers satisfying $a_1=c_2=(k^2-1)/n$ for a putative distance-regular graph $G$ cospectral with $G'$. As mentioned in Remark~\ref{spectral_excess}, Theorem~\ref{CosDR} and Proposition~\ref{CosDeza} apply even if existence of $G$ is not established.

The children of a divisible design graph are the complete multipartite graph and its complement. The spectrum of the complete multipartite graph $K_{m,\ldots,m}$ of order $n$ equals $\{(n-m)^1,\ 0^{n-n/m},\ (-m)^{n/m-1}\}$. So, clearly the eigenvalue multiplicities determine $m$ and $n$ and therefore case~(ii) of Proposition~\ref{CosDeza} does not occur and the result follows.
\end{proof} \hfill $\square$ \\

\begin{table}
\centering
\begin{tabular}{|l|c|l|r|}
\hline
Name & n & Spectrum & Cosp~~~\\[2pt]
\hline
Icosahedron & 12 & $\{5^1,(\sqrt{5})^3,(-1)^5,(-\sqrt{5})^3\}$  & 1 \\[2pt]
Line\ graph\ of\ Petersen\ graph\! & 15 &  $\{4^1,2^5,(-1)^4,(-2)^5\}$  & 1 \\[2pt]
Johnson\ graph\ $J(6,3)$ & 20 & $\{9^1, 3^5,(-1)^9,(-3)^5\}$ & 6 \\[2pt]
Klein\ graph   & 24 & $\{7^1,(\sqrt{7})^8,(-1)^7,(-\sqrt{7})^8\}$ & 10 \\[2pt]
Taylor\ graph\ of\ Payley(13) & 28 & $\{13^1,(\sqrt{13})^7,(-1)^{13},(-\sqrt{13})^7\}$\! & $\geqslant\!{1173}$ \\[2pt]
\hline
\end{tabular}
\caption{Distance-regular strongly Deza graphs with $a_1=c_2$, $d=3$ and $n\leqslant 30$}\label{DRG}
\end{table}

E.~R.~van~Dam, W.~H.~Haemers, J.~H.~Koolen, and E.~Spence~\cite{DHKS} studied graphs cospectral with distance-regular graphs. They checked the non-trivial distance-regular graphs with at most $70$ vertices and diameter $d\geqslant 3$. The trivial cases are the cycles and the incidence graphs of a trivial symmetric $2$-$(k+1,k,k-1)$ design. Let us look at the non-trivial examples on at most 30 vertices.
There are precisely $29$ such distance-regular graphs, and $19$ of them are strongly Deza graphs. Among these strongly Deza graphs there are $14$ incidence graphs of non-trivial symmetric $2$-designs, which are divisible design graphs corresponding to case~(ii) of Theorem~\ref{DRDDG}. We saw that for these graphs all cospectral graphs are also incidence graphs of a symmetric design.
The remaining five are divisible design graphs corresponding to case~(iii) of Theorem~\ref{DRDDG}. These five graphs with their spectra are given in Table~\ref{DRG}. For three of them there exist several cospectral graphs. The last column named {\lq}Cosp{\rq} in the Table gives the number of non-isomorphic graphs with the given spectrum. For each spectrum there is only one distance-regular graph. For the other graphs option~(ii) of Proposition~\ref{CosDeza} does not occur, simply because there exist no strongly regular graphs with different parameters as the children of $G$ but with the same eigenvalue multiplicities. So, one can conclude that each of these graphs is not a Deza graph.

\section*{Acknowledgements}  The research work of Elena V.~Konstantinova is supported by Mathematical Center in Akademgorodok, the agreement with Ministry of Science and High Education of the Russian Federation number 075-15-2019-1613. The research work of M.~A.~Hosseinzadeh has been supported by a research grant from the Amol University of Special Modern Technologies, Amol, Iran.

\end{document}